\definecolor{refcol}{rgb}{0.1,0,0.6}
\algrenewcommand\algorithmicrequire{\textbf{Input:}}
\algrenewcommand\algorithmicensure{\textbf{Output:}}
\newtheorem{theorem}{Theorem}[section]
\newtheorem{lemma}[theorem]{Lemma}
\newtheorem{proposition}[theorem]{Proposition}
\newtheorem{assumption}[theorem]{Assumption}
\newenvironment{proof}{\begin{trivlist}
    \item[\hskip\labelsep{\bf Proof.}]}{$\hfill\Box$\end{trivlist}}
\theoremstyle{plain} \theorembodyfont{\rmfamily}
\newtheorem{remark}[theorem]{Remark}}
\numberwithin{equation}{section}
\numberwithin{figure}{section}
\numberwithin{table}{section}
\newcommand{\comment}[1]{}
\newcommand{\R}{{\mathbb{R}}}
\newcommand{\spann}{{\mathrm{span}}}
\newcommand{\vertiii}[1]{{\left\vert\kern-0.25ex\left\vert\kern-0.25ex\left\vert #1 
\right\vert\kern-0.25ex\right\vert\kern-0.25ex\right\vert}}
\definecolor{mh}{rgb}{0.1,0.45,0.1}
\definecolor{sw}{rgb}{0,0,1}
\title{On the ensemble Kalman inversion under inequality constraints}
\author[1]{Matei Hanu}
\author[2]{Simon Weissmann}
\date{\today}
\affil[1]{\normalsize
  Freie Universit\"at Berlin, Fachbereich Mathematik und Informatik\\
  14195 Berlin, Germany\\
\texttt{matei.hanu@fu-berlin.de}
}
\affil[2]{\normalsize
  Universit\"at Mannheim, Institut f\"ur Mathematik\\
  68138 Mannheim, Germany\\
\texttt{simon.weissmann@uni-mannheim.de}
}
\begin{document}

\maketitle

\begin{abstract}
 The ensemble Kalman inversion (EKI), a recently introduced optimisation method for solving inverse problems, is widely employed for the efficient and derivative-free estimation of unknown parameters. 
  Specifically in cases involving ill-posed inverse problems and high-dimensional parameter spaces,  the scheme has shown promising success.
 However, in its general form, the EKI does not take constraints into account, which are essential and often stem from physical limitations or specific requirements.
 Based on a $\log$-barrier approach, we suggest adapting the continuous-time formulation of EKI to incorporate convex inequality constraints. We underpin this adaptation with a theoretical analysis that provides lower and upper bounds on the ensemble collapse, as well as convergence to the constraint optimum for general nonlinear forward models. Finally, we showcase our results through two examples involving partial differential equations (PDEs).
\end{abstract}

\section{Introduction}
Mathematical models have been employed to describe a wide range of physical, biological, and social systems and processes, enabling the analysis and prediction of their behaviors. When applying a model to a specific system, calibration becomes crucial, aligning the model with observational data. This calibration, often referred to as inversion, serves as the foundation for various applications such as numerical weather prediction, medical image processing, and numerous machine learning methods. In the realm of inversion techniques, two prominent categories are variational/optimisation-based approaches and Bayesian/statistical approaches. This work focuses on a method that bridges these two approaches — the ensemble Kalman inversion (EKI) framework introduced in \cite{Iglesias_2013,Schillings2016}. EKI utilizes an ensemble Kalman-Bucy filter iteratively to address inverse problems.\\
Nevertheless, the fundamental version of EKI lacks the capability to integrate additional constraints on parameters. Such constraints frequently emerge in various applications due to additional insights into the system. Since the estimation of the EKI is usually not feasible, incorporating constraints into the EKI is a significant task. The subsequent discussion will specifically address the efficient integration of convex inequality constraints to the EKI. One approach to incorporate constraints to the EKI has been done in \cite{Chada2019}. Here, the authors introduce a projection method in the discrete-time EKI and derive a continuous-time limit. %In order to achieve descent behavior the authors also propose a transformed method motivated by projected Newton methods \cite{Bertsekas1982}. 
Since the resulting continuous-time limit admits discontinuities in the rhs, the authors proposed a smoothed system using a $\log$-barrier approach and derive convergence for linear forward models. %The motivation comes from hierarchical EKI, analysed in \cite{Chada_2018}, where the parameters are supposedly uniformly distributed. 

 In the following paper, we extend the $\log$-barrier approach from \cite{Chada2019} to a broader class of convex inequality constraints. Moreover, we also incorporate Tikhonov regularization and provide a convergence analysis for nonlinear forward models.

\subsection{Literature overview}
From its very beginning \cite{Evensen2003}, the ensemble Kalman filter (EnKF) has found extensive use in both inverse problems and data assimilation scenarios. Its widespread application is attributed to its easy implementation and resilience with respect to small ensemble sizes \cite{Bergemann2009, Bergemann2010, Iglesias2014, Iglesias2016, Iglesias_2013, Li2009}. Stability has been addressed in works like \cite{Tong2015, Tong2016}, and convergence analysis, grounded in the continuous-time limit of EKI, has been developed in \cite{Bloemker2019, Bloemker2021, Bungert2021, Schillings2017, Schillings2016}. However, achieving convergence results in the parameter space often necessitates some form of regularisation. We primarily focus on Tikhonov regularisation, extensively analyzed for EKI in \cite{Tong2020}, for instance. Recent advancements include further analysis on Tikhonov regularisation for stochastic EKI and adaptive Tikhonov strategies to enhance the original variant \cite{Weissmann2022}. In the context of large ensemble sizes, a mean-field limit analysis is presented in \cite{Stuart2022, Ding2020}.\\
The addition of constraints to the EKI has been analysed more and more in recent years, for example \cite{Albers_2019,Herty_2020}. An extensive survey of existing methodologies for handling linear and nonlinear constraints in Kalman-based methods is available in \cite{Amor2018,Dan2010}. One popular method is to project the estimates of the EKI into the feasible set \cite{Kandepu2008, Wang2009}. The advantage of this method that the theory can be expanded to non-linear constraints.\\
Many of these variations find motivation by interpreting the updates in Kalman-based methods as solutions to corresponding optimisation problems. For additional insights, refer to \cite{Amor2018}.\\
For our analysis we consider preconditioned gradient methods that are based on \cite{Bertsekas2008,Bertsekas1982,Schmidt2011,Shikhman2009}.\\
Finally, we will also incorporate covariance inflation into our algorithm. While the ensemble collapse, which is the convergence of the particles to their mean value, leads to an improvement in the gradient approximation, it also leads to a degeneration of the preconditioner and therefore the EKI may get stuck in a solution that is far from global optimality. Variance inflation is a tool which allows us to control the speed of the collapse \cite{Anderson2008,Tong2015}.

\subsection{Preliminaries}
In the present paper, we consider an inverse problem of the following form
\begin{equation}\label{eqn:invprob}
    y=G(u)+\eta\,.
\end{equation}
The goal is to recover the unknown parameter $u\in X$, where $y\in \mathbb{R}^K$ denotes the observed data and $\eta\sim\mathcal{N}(0,\Gamma)$ is Gaussian additive observational noise with $\Gamma\in\mathbb{R}^{K\times K}$ symmetric positive definite. Moreover, we consider a possibly nonlinear forward map $G: X \rightarrow \mathbb{R}^K$ mapping from the parameter space $X$ to the observation space $\mathbb R^K$. Throughout the manuscript we assume that the parameter space is finite dimensional given by $X:=\mathbb{R}^{d}$. 
We follow a minimisation based approach to solve the inverse problem, where our goal is to find a minimiser of the Tikhonov regularised potential
\begin{equation} \label{eqn:regul_pot}
    \Phi^{\scaleto{\mathrm{reg}}{5pt}}(u):=\frac 12 \|G(u)-y\|^2_\Gamma+\frac{\lambda}{2} \|u\|^2_{C_0}\,.
\end{equation}
Here, $C_0\in\mathbb{R}^{d\times d}$ denotes a symmetric positive definite regularisation matrix and $\lambda>0$ is a regularisation parameter. Moreover, given a symmetric positive definite matrix $\Sigma\in\R^{K\times K}$ we define the rescaled norm $\|x\|_{\Sigma} := \langle x,\Sigma^{-1} x\rangle$, $x\in\R^K$, where $\langle \cdot, \cdot\rangle$ denotes the euclidean inner product over $\R^K$. Moreover, we will use $\|\cdot\|_F$ to denote the Frobenius norm. In the following, assume that the observation $y$, the regularisation matrix $C_0$ and the regularisation parameter $\lambda$ are given. Hence, we suppress the dependence of $\Phi^{\scaleto{\mathrm{reg}}{5pt}}$ on these quantities.

\subsection{Inverse problem under constraints}

In many practical scenarios the unknown parameter $u\in X$ is subjected to physical constraints. In what follows, we will assume that the set of feasible parameters is given as set of inequality constraints of the form  
\begin{equation} \label{eq:convex_constraints}
\Omega=\{u\in X : h_j(u) \leq 0,\ j=1,...,m\}\,,
\end{equation}
where $h_j:\mathbb R^d\to\mathbb R$ are convex functions for all $j=1,\dots,m$. Note that one important example would be the set of linear inequality constraints
\[\Omega=\{u\in X : \langle c_j,u\rangle +\delta_j \leq 0, j=1,...,m\},\]
where $c_j=\pm e_j,\ \delta_j\in\mathbb{R},\ j=1,...,m$ and $e_j$ denotes the $j$-th unit vector. Then $\delta_j$ denotes an upper and lower bound on the $j$-th component of $u$. Defining $h_j(u)=\langle c_j,u\rangle +\delta_j$ for all $j=1,...,m$ we are back in the representation \eqref{eq:convex_constraints}. %$can also illustrate the feasible set as 
%\[\Omega=\{u\in X : h_j(u) \leq 0, j=1,...,m\}.\]
Our goal is to solve the inverse problem \eqref{eqn:invprob} under convex-constraints $u\in\Omega$ by solving the constrained optimisation problem
\begin{equation} \label{eqn:cons_min_prob}
    \min_{u\in\Omega}\ \Phi^{\scaleto{\mathrm{reg}}{5pt}}(u).
\end{equation}
We make the following assumption on the considered %nonlinear forward map $G$ and the corresponding 
objective function $\Phi^{\scaleto{\mathrm{reg}}{5pt}}$.
%\todo[inline]{Wahrscheinlich benötigen wir die selben Annahmen wie in deinem Nonlinear analysis paper um ein konvexes Optimierungsproblem zu bekommen und die Theorie verwenden zu können}
\begin{assumption} \label{assu:convex_lip}
    The functional $\Phi^{\scaleto{\mathrm{reg}}{5pt}}$ is $C^2(X,\mathbb{R}_+)$ as well as
    \begin{enumerate}
        \item $\mu$-strongly convex, i.e. there exists $\mu>0$ such that
            $$\Phi^{\scaleto{\mathrm{reg}}{5pt}}(x_1)-\Phi^{\scaleto{\mathrm{reg}}{5pt}}(x_2)\geq \langle \nabla \Phi^{\scaleto{\mathrm{reg}}{5pt}}(x_2),x_1-x_2\rangle+\frac{\mu}{2}\|x_1-x_2\|^2, \quad \text{for all}\ x_1,x_2 \in X.$$
        \item $L$-smooth, i.e. there exists $L>0$ such that the gradient $\nabla \Phi^{\scaleto{\mathrm{reg}}{5pt}}$ is global $L$-Lipschitz continuous:
        \[\|\nabla \Phi^{\scaleto{\mathrm{reg}}{5pt}}(x_1)-\nabla \Phi^{\scaleto{\mathrm{reg}}{5pt}}(x_2)\|\le L \|x_1-x_2\|\quad \text{for all}\ x_1,x_2 \in X\,.  \]
    \end{enumerate}
\end{assumption}
We note that while the above assumptions are formulated globally, our theoretical analysis requires these assumptions only locally in $\Omega$. The smoothness property implies the following useful descent condition
\[\Phi^{\scaleto{\mathrm{reg}}{5pt}}(x_1)-\Phi^{\scaleto{\mathrm{reg}}{5pt}}(x_2)\leq \langle \nabla \Phi^{\scaleto{\mathrm{reg}}{5pt}}(x_2),x_1-x_2\rangle+\frac{L}{2}\|x_1-x_2\|^2, \quad \text{for all}\ x_1,x_2 \in X\,\]
which is a standard property used to prove convergence of first order optimisation methods. Moreover, since $\Phi^{\scaleto{\mathrm{reg}}{5pt}}$ is assumed to be $\mu$-strongly convex it also satisfies the Polyak-\L ojasiewic (PL) inequality of form
\[\nu\|\nabla \Phi^{\scaleto{\mathrm{reg}}{5pt}}(x)\|^2\geq \Phi^{\scaleto{\mathrm{reg}}{5pt}}(x)-\Phi^{\scaleto{\mathrm{reg}}{5pt}}(x^\ast),\]
for some $\nu>0$ and all $x\in X$, where $x^\ast$ is the unique global minimiser of $\Phi^{\scaleto{\mathrm{reg}}{5pt}}$. We provide a detailed derivation in Lemma~\ref{lemma:pl-phib}. It is well-known that under $L$-smoothness and the above PL inequality the gradient descent method converges linearly towards the unique minimiser $x^\ast$ \cite{Karimi2016}.

\begin{remark}
    Usually the strong convexity of \eqref{eqn:regul_pot}, that we assume in Assumption~\ref{assu:convex_lip} can %only
    be achieved through large enough regularisation parameter $\lambda$. However, due to this large choice of $\lambda$ the computed solution of the regularised problem might be irrelevant for the initial potential $\frac 12 \|G(u)-y\|^2_\Gamma$. There have been several discussions on the strong convexity of Tikhonov regularisation as well as other forms of regularisations for which we refer to \cite{Crane2021,Kokurin2010}.\\
    In Section~\ref{ex:psylinea}, we present a nonlinear example with Tikhonov regularisation to obtain a strongly convex potential. The example is based on a linear forward model where we add a nonlinear perturbation to the model leading to a nonlinear inverse problem. The magnitude of the nonlinear term is controlled by some $\varepsilon>0$. Then the regularisation parameter can be chosen depending on $\varepsilon$, i.e. $\lambda(\varepsilon)$, 
    such that we can keep the regularisation at a low level and still obtain a strongly-convex functional by controlling the magnitude of the nonlinear term.
\end{remark}
Note that under Assumption~\ref{assu:convex_lip} the optimisation problem \eqref{eqn:cons_min_prob} is convex and therefore any point $u^\ast\in\Omega$ that satisfies the Karuhn-Kash-Tucker (KKT) conditions, i.e. there exists $\lambda^\ast\in\mathbb{R}^m$ such that $u^\ast\in\Omega$ satisfying  
\begin{itemize}
    %\item $u^\ast \in \Omega$.
    \item $\nabla \Phi^{\scaleto{\mathrm{reg}}{5pt}}(u^\ast)+\sum_{j=1}^m\lambda^\ast_j \nabla h_j(u^\ast)=0$,
    \item $\lambda^\ast_j h_j(u^*)\leq0$ for all $j\in\{1,...,m\}$,
    \item $\lambda^\ast_j\geq 0$ for all $j\in\{1,...,m\}$ and $ \sum_{j=1}^m \lambda^\ast_jh_j(u^*)=0$,
\end{itemize}
is the unique global minimiser of $\Phi^{\scaleto{\mathrm{reg}}{5pt}}$, also called the KKT point of \eqref{eqn:cons_min_prob}. %We simplify the setting even more and define our set of feasibility through the box $\mathcal{B}=\{u\in X : a_i\leq u_i\leq b_i, i=1,...,m\}, m\leq n$.

\subsection{Our contribution}

In this manuscript we will apply the EKI as derivative-free optimisation method for solving \eqref{eqn:regul_pot} under convex inequality constraints on $u\in X$. The EKI for solving optimisation problems under box-constraints has been introduced in \cite{Chada2019}, where the authors provide a convergence analysis for linear forward operators without regularisation. The purpose of this work is to make use of the gradient flow structure of EKI presented in \cite{Weissmann_2022} for extending the convergence analysis of linear EKI under box-constraints to nonlinear EKI under convex inequality constraints. Moreover, our proposed scheme allows to incorporate Tikhonov regularisation. We make the following contribution:
\begin{itemize}
    \item We suggest a new adaptation of EKI, enabling the integration of convex inequality constraints on the unknown parameters using a $\log$-barrier penalty approach. The adaptation incorporates Tikhonov regularisation as well as covariance inflation. 
    \item Under strong convexity and smoothness, we provide a convergence analysis of our adaptation, where we analyse feasibility, ensemble collapse and convergence to the unique KKT-point.
    \item We demonstrate our findings through two examples based on partial differential equations (PDEs). In order to keep the implementation of the proposed scheme efficient we apply an adaptively increasing penalty parameter during the algorithm.
\end{itemize}
The paper is organised as follows. We introduce our adaptation in Section~\ref{sec:EKI}. In Section~\ref{sec:feasibity_and_ek} we quantify the ensemble collapse of our scheme as well as verify the feasibility of the computed solutions. We analyse the convergence of our scheme in Section~\ref{sec:main_res}; before presenting our numerical experiments in Section~\ref{sec_NumExp}. Finally, we summarise our work with a conclusion in Section~\ref{sec:conclusion}.

%Our motivation is to introduce Box-constraints on the particles that we are considering, i.e. we seek minimisers of \eqref{eqn:regul_pot} under constraints on $u\in X$.
%This analysis has been done in \cite{Chada2019} for a linear forward operator. We are considering now the nonlinear case.\\

\section{Ensemble Kalman Inversion} \label{sec:EKI}
We consider the ensemble Kalman inversion (EKI) to solve the inverse problem \eqref{eqn:invprob}, where we will focus on the continuous-time limit of the scheme, cp. \cite{Schillings2017}. The framework is the following.

Firstly, we define an initial ensemble $u_0 = (u_0^{(j)})_{j =1,\dots, J}$, $ u_0^{(j)}\in X$, $j=1,\dots,J$, of size $J\ge2$. %We assume w.l.o.g. that the particles $(u_0^{(j)})_{j=1,\dots,J}$ are linearly independent. %family, with $J \in \mathbb{N}, J \geq 2$, and $J := \{1,\ldots,J\}$.
In the continuous-time formulation of EKI, the particle system $u_t = (u_t^{(j)})_{j =1,\dots, J}$ then moves according to the dynamical system
\begin{align} \label{EKI_basic}
    \frac{\mathrm{d} u^{(j)}_t}{\mathrm{d}t} &= - \widehat{C}^{u,G}_t \Gamma^{-1} (G(u^{(j)}_t)-y), \qquad j =1,\dots,J % \\ u(0) &= u_0, \notag
\end{align}
with initial state $u_0$. Here, we have defined the following empirical means and covariances within the particle system $u_t$, $t\ge 0$,
\begin{align*}
    \widehat{C}^{u,G}_t &:= \frac{1}{J} \sum_{j = 1}^J (u^{(j)}_t - \overline{u}_t)\otimes(G(u^{(j)}_t) - (\overline{G}(u_t)),\\
    \overline{u}_t &= \frac{1}{J}\sum_{j=1}^{J}u^{(j)}_t,\quad \overline{G}(u_t) = \frac{1}{J}\sum_{j=1}^{J}G(u^{(j)}_t)\,.
\end{align*}

Following \cite{Tong2020} we can incorporate Tikhonov regularisation into the particle system using the time evolution
\begin{align} \label{EKI_Tikho}
    \frac{\mathrm{d} u^{(j)}_t}{\mathrm{d}t} &= - \widehat{C}^{u,G}_t \Gamma^{-1} (G(u^{(j)}_t)-y)-\widehat{C}(u_t)\alpha^{1/2} C_0^{-1}u^{(j)}_t, \qquad j=1,\dots,J\,. %\\ u(0) &= u_0. \notag
\end{align}

In case of a linear forward map $G$ the EKI dynamics \eqref{EKI_Tikho} can be written as system of gradient flows
\begin{align} \label{eqn:precondflow}
    \frac{\mathrm{d} u^{(j)}_t}{\mathrm{d}t}=-\widehat{C}(u_t)\nabla \Phi^{\scaleto{\mathrm{reg}}{5pt}}(u^{(j)})\, .
\end{align}
which are coupled through an adaptive preconditioner given by the empirical covariance matrix $\widehat{C}(u_t)$. In case of a nonlinear forward operators, this representation in general only holds approximatively. Indeed, by Taylors Theorem it can be justified that $\widehat{C}^{u,G}_t \approx \widehat{C}(u_t) D G^*(u^{(j)})$, where $D G$ denotes the (Frechet) derivative of $G$ (see \cite[Lemma~4.5]{Weissmann_2022}). It follows that \eqref{EKI_Tikho} can be viewed as derivative-free approximation of the preconditioned gradient flow \eqref{eqn:precondflow} also in the nonlinear setting. To be more precise, we make the following sufficient assumptions on the nonlinear forward map  to justify the approximate gradient flow structure of EKI.
\begin{assumption}\label{ass:linear_apprx}
    The forward operator $G$ is locally Lipschitz continuous, with constant $c_{lip}>0$ and satisfies the linear approximation property
        \begin{equation}\label{eqn:approx_error}
            G(x_1)=G(x_2)+DG(x_2)(x_1-x_2)+Res(x_1,x_2)\, ,
        \end{equation}
        for all $x_1,x_2\in X$. The approximation error is bounded by
        \begin{equation}
            \|Res(x_1,x_2)\|_2\leq b_{res}\|x_1-x_2\|_2^2\,.
        \end{equation}
\end{assumption}

\subsection{Ensemble Kalman inversion under box constraints}
In the following, we will revisit the EKI Algorithm under box-constraints and modify the formulation with the goal of minimising \eqref{eqn:regul_pot} under box-constraints. We refer to \cite[Section~3]{Chada2019} for more details. In the following, we consider the constrained optimisation problem
\begin{equation}
    \min_{u\in\mathcal B}\ \Phi(u),\quad \Phi(u):= \frac12\|G(u)-y\|_\Gamma^2\, ,
\end{equation}
where $\mathcal{B}=\{u\in \R^d: a_i\leq u_i\leq b_i, i=1,...,m\}, m\leq d$, denotes the considered box. In \cite{Chada2019} the authors incorporate the box-constraints into the algorithm using am element-wise projection into the box defined as
\begin{align*}
        (\mathcal{P}(u))_i &= 
        \left\{
            \begin{aligned}
                a_i, \quad &\text{if}\ u_i<a_i,\\
                u_i, \quad &\text{if}\ u_i\in \left[a_i,b_i\right],\\
                b_i, \quad &\text{if}\ u_i>b_i,
            \end{aligned} 
        \right.\quad  i=1,...,m,\\
        (\mathcal{P}(u))_i &= u_i \quad i=m+1,...,n.
    \end{align*}
Following the idea of projected gradient methods \cite{Bertsekas1982,Shikhman2009}, EKI under box-constraints in discrete time proceeds by projecting the ensemble of particles into the box as introduced in \cite{Chada2019}.
The authors define the following variables
$$u^{(j)}_{n,p}=\mathcal{P}(u^{(j)}_n), \quad \bar{u}_{\mathcal{P}}=\frac{1}{J}\sum_{j=1}^J\left(u^{(j)}_{n,p}\right), \quad \bar{G}_{\mathcal{P}}=\frac{1}{J}\sum_{j=1}^J G\left(u^{(j)}_{n,p}\right),$$
as well as the empirical covariance matrices
\begin{align*}
    \widehat{C}^{u,u}_{n,\mathcal{P}} &:= \frac{1}{J} \sum_{j = 1}^J \left(u^{(j)}_{n,p} - \bar{u}_{\mathcal{P}}\right) \otimes \left(u^{(j)}_{n,p} - \bar{u}_{\mathcal{P}}\right),\\
    \widehat{C}^{u,G}_{n,\mathcal{P}} &:= \frac{1}{J} \sum_{j = 1}^J \left(u^{(j)}_{n,p} - \bar{u}_{\mathcal{P}}\right) \otimes \left(G\left(u^{(j)}_{n,p}\right) - \bar{G}_{\mathcal{P}}\right),\\
    \widehat{C}^{G,G}_{n,\mathcal{P}} &:= \frac{1}{J} \sum_{j = 1}^J \left(G\left(u^{(j)}_{n,p}\right) - \bar{G}_{\mathcal{P}}\right) \otimes \left(G\left(u^{(j)}_{n,p}\right) - \bar{G}_{\mathcal{P}}\right),
\end{align*}
The update formula of the EKI under box-constraints then consists of a prediction step and a projection onto the box $\mathcal{B}$.
\begin{equation} \label{eqn:discrete_update}
\begin{cases}
\tilde{u}^{(j)}_{n+1,p}&=u^{(j)}_{n,p}+\widehat{C}^{u,G}_{n,\mathcal{P}}(\widehat{C}^{G,G}_{n,\mathcal{P}}+h^{-1}\Gamma)^{-1}(y-G(u^{(j)}_{n,p})),\\
    u^{(j)}_{n+1,p}&=\mathcal{P}(\tilde{u}^{(j)}_{n+1,p}).
\end{cases}
\end{equation}
Taking the limit $h\to0$ one obtains the continuous-time formulation of EKI under box-constraints given by a coupled system of ODEs
    \begin{align}
        \left(\frac{\mathrm{d} u^{(j)}_t}{\mathrm{d}t}\right)_i &= 
            \begin{cases}
                v_i(u^{(j)}_t), & (u^{(j)}_t)_i\in \left(a_i,b_i\right),\\
                \mathbbm{1}_{\left[0,\infty\right)}(v_i(u^{(j)}_t))v_i(u^{(j)}_t), &(u^{(j)}_t)_i=a_i,\\
                \mathbbm{1}_{\left[-\infty,0\right)}(v_i(u^{(j)}_t))v_i(u^{(j)}_t), &(u^{(j)}_t)_i=b_i,
            \end{cases} 
        \quad i=1,...,m,\label{eqn:box_cont_limit}\\
        \left(\frac{\mathrm{d} u^{(j)}_t}{\mathrm{d}t}\right)_i &= u_i \quad i=m+1,...,n, \notag
    \end{align}
where 
%\[v_i(u^{(j)}_t)=\left[-\widehat{C}^{u,G}_t \Gamma^{-1} (G(u^{(j)}_t)-y)-\widehat{C}(u_t) C_0^{-1}u^{(j)}_t\right]\]
\[v_i(u^{(j)}_t)=\left[-\widehat{C}^{u,G}_t \Gamma^{-1} (G(u^{(j)}_t)-y)\right]_i,  \]
which in the linear setting again simplifies to 
\[v_i(u^{(j)}_t)=\left[-\widehat{C}(u_t) \nabla \Phi(u_t^{(j)}) \right]_i \,. \]
Roughly speaking, the dynamical system \eqref{eqn:box_cont_limit} evolves similarly to EKI \eqref{EKI_basic}, but forces the particles to stay within the box by deactivating directions pointing outside of the box at the boundary. The resulting system of ODEs occurs with two cruical problems, which are discontinuities of the RHS of \eqref{eqn:box_cont_limit} and  the fact that straightforward preconditioning may lead to forcing directions which are no descent direction with respect to the potential $\Phi$. The latter one is a well known problem for preconditioned gradient methods \cite{Bertsekas1982}. To overcome this issue, the authors propose to consider a smoothed coupled system of ODEs given by
\begin{equation}\label{eq:EKI_box_smooth}
    \frac{\mathrm{d} u^{(j)}_t}{\mathrm{d}t}=\tau v(u^{(j)}_t)+\sum_{i=1}^{2m}\frac{1}{h_i(u_t^{(j)})}\nabla h_i(u_t^{(j)}),
\end{equation}
where $h_i(u)=a_i-u_i$ for $i=1,...,m$ and $h_{i+m}(u)=u_i-b_i$ for $i=1,...,m$. The purpose of introducing this form of smoothing is the resulting connection to the unconstrained optimisation problem
    \begin{equation}\label{eqn:log_barrier_sys_noreg}
        \min_{u\in\mathbb{R}^{d}} \tau\Phi(u)-\sum_{i=1}^{2m}\log(-h_i(u))\, .\notag
    \end{equation} 
using a $\log$-barrier penalty approach. Indeed, assuming that the forward model is linear and the constraints are described by boxes, one can prove that the solution of \eqref{eqn:box_cont_limit} solves \eqref{eqn:log_barrier_sys_noreg} in the long-time limit \cite[Theorem~3.4]{Chada2019}. In the following, we want to generalize this result to the Tikhonov regularised optimisation problem under more general convex inequality constraints.

\subsection{Tikhonov regularised EKI under convex inequality constraints } \label{sec:EKi_with_conv_BC}

Motivated by the continuous-time formulation of EKI under box constraints using the smoothed system \eqref{eq:EKI_box_smooth} we are now ready to present our considered dynamical system for solving 
\begin{equation}\label{eqn:opti_box} 
\min_{u\in\Omega}\ \Phi^{\scaleto{\mathrm{reg}}{5pt}}(u)\,,
\end{equation}
where $\Omega = \{u\in X: h_j(u)\le 0,\ j=1,\dots,m\}$ describes our set of inequality constraints. To be more precise, we make the following assumptions on the feasible set $\Omega$ for convex and continuously differentiable functions $h_j:X\to\mathbb R$.
\begin{assumption}\label{ass:inequality_constr}
    We assume that for each $j=1,\dots,m$ the function $h_j:X\to\mathbb R$ is convex and continuously differentiable. Moreover, we assume that the interior of $\Omega$ is non-empty, i.e.~there exists $u\in X$ such that $h_j(u)<0$ for each $j=1,\dots,m$.
\end{assumption}
\begin{remark}\label{rem:non_degenerate_grad}
    Note that by Assumption~\ref{ass:inequality_constr} it follows that for any $u\in X$ with $h_j(u) = 0$ we have that $\nabla h_j(u)\neq 0$. If $\nabla h_j(u) = 0$ for some $u\in X$ with $h_j(u)=0$, by convexity of $h_j$ it would follow that $\min_{u\in X}\ h_j(u) = 0$, which is in contradiction to Assumption~\ref{ass:inequality_constr}.
\end{remark}
We firstly reformulate the constrained optimisation problem as unconstrained problem using the $\log$-barrier penalty approach
\begin{equation*}
    \min_{u\in\mathbb{R}^{d}} \ \tau\Phi^{\scaleto{\mathrm{reg}}{5pt}}(u)-\sum_{i=1}^{m}\log(-h_i(u))\, ,
\end{equation*}
where $\tau>0$ is a penalty parameter. This optimisation problem can equivalently be rewritten through
\begin{equation}\label{eqn:log_barrier_sys}
    \min_{u\in\mathbb{R}^{d}} \ \Phi^{\scaleto{\mathrm{reg}}{5pt}}(u)-\frac{1}{\tau}\sum_{i=1}^{m}\log(-h_i(u))\, ,
\end{equation}
We define $\Phi^b(u)=\Phi^{\scaleto{\mathrm{reg}}{5pt}}(u)-\frac{1}{\tau}\sum_{i=1}^{m}\log(-h_i(u))$. Observe that $\Phi^b$ is strongly convex in $\Omega$ due to $\Phi^{\scaleto{\mathrm{reg}}{5pt}}$ being strongly convex and the assumption that $h_i$ are convex. Hence, for any $\tau>0$ there exists a unique global minimiser $u_\ast^\tau\in \Omega$ %that satisfies the KKT-conditions 
of $\Phi^b$. Note that $\Phi^b(u)$ is not well-defined for $u\notin \Omega\setminus \partial \Omega $, and hence we are working with the convention that $\Phi^b(u)=+\infty$ for $u\notin\Omega\setminus \partial \Omega$. Moreover, using duality arguments one can even bound
\begin{equation} \label{eqn:error_true_kkt}
    \Phi^{\scaleto{\mathrm{reg}}{5pt}}(u_\ast^\tau)-\Phi^{\scaleto{\mathrm{reg}}{5pt}}(u_\ast)\le \frac{m}{\tau}\, ,
\end{equation}
where $u_\ast$ denotes the unique minimiser of the constrained problem \eqref{eqn:opti_box}, we refer to \cite[Section~11.2]{BV04} for more details. 

\begin{remark}
    We can see in \eqref{eqn:error_true_kkt} that the computed solution from solving problem \eqref{eqn:log_barrier_sys} is approaching the true KKT point $u_\ast$ as we increase our penalty parameter $\tau$. However, as $\tau$ increases the computation of the minimiser of \eqref{eqn:log_barrier_sys} using an iterative optimisation method becomes more expensive. This is a well-known problem for penalty methods which is due to small steps of the applied scheme which are needed close to the boundary in order to ensure feasibility. Depending on the implemented optimization scheme, one may need to check feasibility in each iteration and successively decrease the step size. In practical implementations one usually solves the optmisation problem for fixed but sequentially increasing values of the penalty parameter $\tau=\tau_k$. Here, one may use the solution of the preceding experiment as initial point for the next sequence.\\
     We will further discuss how to apply this methodology in EKI in Section~\ref{sec:implem_adap_tau}. In regard to our algorithm we propose a method to use an adaptively increasing choice of $\tau$ which keeps the computational run-time low.
\end{remark}
For solving \eqref{eqn:log_barrier_sys} we incorporate the gradient descent direction of the $\log$-barrier penalty into the dynamical system of Tikhonov regularised EKI leading to the dynamical system
\begin{equation}\label{eqn:smooth_box_eki}
    \frac{\mathrm{d} {u}_t^{(j)}}{\mathrm{d}t}= \left[-\widehat{C}^{u,G}_t \Gamma^{-1} ({G}(u_t^{(j)})-y)-\widehat{C}({u}_t) C_0^{-1}{u}_t^{(j)}\right]+\frac{1}{\tau}\widehat{C}(u_t)\sum_{i=1}^{m}\frac{1}{h_i(\bar{u}_t)}\nabla h_i(\bar{u}_t)\,,
\end{equation}
for $\tau>0$ and $j=1,\dots,J$. Let us emphasize that our considered dynamical system differs from \eqref{eq:EKI_box_smooth} in more than just the additional regularisation term. Firstly, we are incorporating the $\log$-barrier function with respect to $\bar u_t$ instead of for each particle $u_t^{(j)}$. This leads structural advantage when theoretically analysing the dynamical behavior of the ensemble mean. As result, the feasibility of the scheme with respect to the convex inequality constraints can only be guaranteed for the ensemble mean. However, in practical scenarios it may be necessary to impose feasibility for each particle which could be guaranteed by the dynamical system
\begin{equation*}
    \frac{\mathrm{d} {u}_t^{(j)}}{\mathrm{d}t}=\left[-\widehat{C}^{u,G}_t \Gamma^{-1} ({G}(u_t^{(j)})-y)-\widehat{C}({u}_t) C_0^{-1}{u}_t^{(j)}\right]+\frac{1}{\tau}\widehat{C}(u_t)\sum_{i=1}^{m}\frac{1}{h_i({u}_t^{(j)})}\nabla h_i({u}_t^{(j)})\, .
\end{equation*}
In what follows, we will focus on the dynamical system given by \eqref{eqn:smooth_box_eki}. 
As second difference to \eqref{eq:EKI_box_smooth}, we introduce a preconditioning of the gradient descent direction resulting from the $\log$-barrier term by the empirical covariance. This preconditioning ensures the well-known subspace property of ensemble Kalman methods. This means, that the particle system solving \eqref{eqn:smooth_box_eki} remains in the linear subspace spanned by the initial ensemble. We define 
\[\mathcal{S}:= u_0^\perp + \spann\{u_0^{(j)}-\bar{u}_0, j=1,...,J\}\, , \]
where $u_0^\perp=\bar{u}_0-\mathcal{P}_E \bar{u}_0$ with the projection matrix $\mathcal{P}_E=E(E^TE)^{-1}E^T$ onto $E\in\mathbb{R}^{d\times J}$ denotes matrix with columns consisting of the centered initial particles, i.e. $E=\left[u_0^{(1)}-\bar{u}_0,...,u_0^{(J)}-\bar{u}_0\right]$. Therefore, the constrained optimisation problem \eqref{eqn:opti_box} changes to 
\begin{equation} \label{eqn:cons_min_prob_subspace}
    \min_{u\in\Omega\cap\mathcal{S}} \Phi^{\scaleto{\mathrm{reg}}{5pt}}(u)\,,
\end{equation}
in case $J\le d$.

\begin{lemma}\label{lemma:subspaceproperty}
    Let $(u_0^{(j)})_{j=1,\dots,J}$ be the %linearly independent 
    initial ensemble. Then any solution $(u_t^{(j)},t\ge0)$ of \eqref{eqn:smooth_box_eki} remains in the affine subspace $\mathcal S$, i.e.~$u_t^{(j)}\in\mathcal S$ for all $t\ge0$ and $j=1,\dots,J$.
    %The particles satisfying \eqref{eqn:smooth_box_eki} remain in the affine subspace $\mathcal{B}$ for all $t\geq 0.$ 
\end{lemma}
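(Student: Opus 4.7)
The plan is to exploit the fact that the empirical covariance matrices $\widehat{C}^{u,G}_t$ and $\widehat{C}(u_t)$ are sums of rank-one outer products of the centered particles $u_t^{(k)} - \bar u_t$, so multiplying either by an arbitrary vector produces a linear combination of those centered particles. Hence for every $j$, the right-hand side of \eqref{eqn:smooth_box_eki} always lies in $W_t := \spann\{u_t^{(k)} - \bar u_t : k=1,\dots,J\}$. This is true of the Tikhonov--EKI part and equally of the $\log$-barrier term, which is just $\widehat C(u_t)$ applied to a single vector evaluated at $\bar u_t$; so no separate argument about $\nabla h_i$ is needed.

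To turn this into the invariance of $\mathcal S$, I would write $W := \spann\{u_0^{(k)} - \bar u_0\}$ for the direction subspace of the affine set $\mathcal S = u_0^\perp + W$ and first check that each initial particle already lies in $\mathcal S$: indeed $u_0^{(j)} - u_0^\perp = (u_0^{(j)} - \bar u_0) + \mathcal P_E \bar u_0 \in W$, using $u_0^\perp = \bar u_0 - \mathcal P_E \bar u_0$ and $\mathrm{Im}(\mathcal P_E) = W$. It then suffices to show that $u_t^{(j)} - u_0^\perp \in W$ persists for all $t \geq 0$ at which the solution exists.

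The key self-consistent observation is that whenever the full ensemble satisfies $u_t^{(k)} \in \mathcal S$ for every $k$, the centered particles $u_t^{(k)} - \bar u_t$ all lie in $W$ (since $\bar u_t$ is an affine combination of the $u_t^{(k)}$), hence $W_t \subseteq W$, and therefore by the first step $\tfrac{\mathrm d}{\mathrm d t} u_t^{(j)} \in W$. In other words, the vector field defining \eqref{eqn:smooth_box_eki} on $X^J$ is tangent to the affine submanifold $\mathcal S^J$ at every point of $\mathcal S^J$. By the standard invariance-of-submanifolds argument for locally Lipschitz ODEs, the unique solution launched from $(u_0^{(j)})_j \in \mathcal S^J$ stays in $\mathcal S^J$ as long as it exists; equivalently, introducing the orthogonal projection $P := I - \mathcal P_W$, one checks $P\,\tfrac{\mathrm d}{\mathrm d t}u_t^{(j)} = 0$ on any interval where the subspace property holds, and a continuation argument then extends this to the full interval of existence.

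I expect the main subtlety to be presentational rather than mathematical: one must keep the affine structure of $\mathcal S$ straight, so that tangent vectors live in the \emph{linear} subspace $W$ while the points themselves are offset by $u_0^\perp$; and one must note that the $\log$-barrier contribution inherits the range property from the preconditioner $\widehat C(u_t)$ rather than from any property of $\nabla h_i(\bar u_t)$, so the argument is agnostic to whether $\bar u_t$ lies in $\Omega$.
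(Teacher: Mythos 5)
Your proposal is correct and follows essentially the same route as the paper: the crucial observation in both is that every term on the right-hand side of \eqref{eqn:smooth_box_eki}, including the $\log$-barrier term, is the empirical covariance applied to some vector and hence a linear combination of the centered particles $u_t^{(k)}-\bar u_t$, so the dynamics never leave the affine space $\mathcal S$. The paper simply cites \cite{Tong2020} for the Tikhonov--EKI part and states the conclusion directly, whereas you additionally spell out the tangency/flow-invariance step; this is a harmless (indeed welcome) elaboration, not a different method.
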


\begin{proof}
    The proof of the first term has been shown in \cite{Tong2020}. For the latter part we obtain
\[\frac{1}{\tau}\widehat{C}(u_t)\sum_{i=1}^{m}\frac{1}{h_i(u_t)}\nabla h_i(u_t)=\frac{1}{\tau}\frac1J \sum_{j=1}^J \sum_{i=1}^{m}\frac{1}{h_i(\bar{u}_t)} \langle u_t^{(j)}-\bar{u}_t,\nabla h_i(\bar{u}_t)\rangle (u_t^{(j)}-\bar{u}_t)\, .\]
Hence, the latter term also remains in the space spanned by the centered particles and therefore the particles remain in the affine space $\mathcal{S}$.
\end{proof}
Finally, we make the following assumption which is necessary to ensure feasibility and even the possibility for implementation. We need to ensure that the dynamical system is initialized with a feasible particle system.  
\begin{assumption}\label{ass:feasibility}
    We assume that the mean of the initial ensemble $(u_0^{(j)})_{j=1,\dots,J}$ lies in the interior of the feasible set $\Omega$, i.e.~we assume that $h_i(\bar u_0)<0$ for all  $i=1,\dots,m$. %If not we project the particles onto $\Omega$.\\
        Furthermore, we assume that %the coordinates that aren't subject to some bound at $t=0$ are also bounded. Hence, we have 
        $\|u_0^{(j)}\|\leq c$ for all $j=1,...,J$ and some $c>0$.
\end{assumption}
If the initial ensemble is not feasible, we may project the particles onto $\Omega$. Hence, we assume without loss of generality that Assumption~\ref{ass:feasibility} is satisfied.

\subsection{Covariance inflation}

One fundamental key step in analysing EKI algorithms is to quantify the ensemble collapse, which is the degeneration of the spread in the particle system. While the gradient approximation improves when the particles are close to each other, the preconditioner degenerates as well and the scheme may get stuck when being far away from the optimal solution.
%One crucial technical challenge in analysing the convergence behavior of EKI, is to control ensemble spread. While the gradient approximation through $\widehat C_t^{u,G}\approx \widehat C(u_t) DG^\ast (u^{(j)})$ is more accurate when the particles are close to each other, we observe a degeneration of the preconditioner $\widehat C(u_t)$ when the ensemble collapses. 
The dynamics of the centered particles are given by
\begin{equation}
    \frac{\mathrm{d} \bar{u}_t}{\mathrm{d}t}= \left[-\widehat{C}^{u,G}_t \Gamma^{-1} (\bar{G}(u_t)-y)-\widehat{C}({u}_t )C_0^{-1}\bar{u}_t\right]+\frac{1}{\tau}\widehat{C}(u_t)\sum_{i=1}^{m}\frac{1}{h_i(\bar{u}_t)}\nabla h_i(\bar{u}_t).\notag
\end{equation}
For motivating EKI as derivative-free optimisation method, we split the dynamical system into a (preconditioned) gradient flow and an approximation error, written as
\begin{align*}
    \frac{\mathrm{d} \bar{u}_t}{\mathrm{d}t}&= \left[-\widehat{C}({u}_t) \nabla \Phi^{\rm{reg}}(\bar u_t)\right]+\frac{1}{\tau}\widehat{C}(u_t)\sum_{i=1}^{m}\frac{1}{h_i(\bar {u}_t)}\nabla h_i(\bar{u}_t) + {\rm{Err}}(u_t^{(1)},\dots, u_t^{(J)})\\
    &= -\widehat C(u_t) \nabla \Phi^b(\bar u_t) + {\rm{Err}}(u_t^{(1)},\dots, u_t^{(J)})\,.
\end{align*}
For EKI without constraints one can indeed verify that the ensemble collapses but not too fast \cite{Weissmann_2022}. As result the approximation error ${\rm{Err}}(u_t^{(1)},\dots, u_t^{(J)})$ will degenerate in time, while the lower bound on $\widehat C(u_t)$ will ensure the convergence of the scheme. 

In order to obtain more flexibility in controlling the speed of collapse from below and above, we are going to introduce covariance inflation. In particular, we are applying the covariance inflation introduced in \cite[Section~6]{Weissmann_2022} which inflates the particle system without changing the dynamical behavior of the ensemble mean.

The centered particles of \eqref{eqn:smooth_box_eki} without covariance inflation satisfy
\begin{align*}
    \frac{\mathrm{d} (u^{(j)}_t-\bar{u}_t)}{\mathrm{d}t}= -\widehat{C}^{u,G}_t \Gamma^{-1} (G(u^{(j)}_t)-\bar{G}(u_t))-\widehat{C}({u}_t) C_0^{-1}(u^{(j)}_t-\bar{u}_t)\,,
\end{align*}
where both terms force the particles to collapse. In order to reduce the contracting forces, we wish to relax these forces by changing the dynamical system of the ensemble spread to  
\begin{align*}
    \frac{\mathrm{d} (u^{(j)}_t-\bar{u}_t)}{\mathrm{d}t}= -(1-\rho_t)\widehat{C}^{u,G}_t \Gamma^{-1} (G(u^{(j)}_t)-\bar{G}(u_t))-(1-\beta_t)\widehat{C}({u}_t) C_0^{-1}(u^{(j)}_t-\bar{u}_t)\,,
\end{align*}
for $\rho_t,\beta_t\in[0,1)$ scaling the inflation strength. However, we aim to introduce this inflation without changing the dynamical behavior of the ensemble mean itself, which is used as optimisation scheme. One possible way of achieving the covariance inflation without changing the dynamical behavior of the ensemble mean is to consider the particle evolution of form
\begin{align}\label{eqn:dyn_varinfl}
    \frac{\mathrm{d} u^{(j)}_t}{\mathrm{d}t}= p_{\rho,\beta}(u_t^{(j)})
    +\frac{1}{\tau}\widehat{C}(u_t)\sum_{i=1}^{m}\frac{1}{h_i(\bar{u}_t)}\nabla h_i(\bar{u}_t),
\end{align}
where
    \begin{equation}\label{eq:cov_force}
    \begin{split}
    p_{\rho,\beta}(u_t^{(j)})&=-\widehat{C}^{u,G}_t \Gamma^{-1} ({G}(u_t^{(j)})-y)-\widehat{C}({u}_t) C_0^{-1}{u}^{(j)}_t\\ &\quad + \rho_t\widehat{C}^{u,G}_t \Gamma^{-1} ({G}(u_t^{(j)})-\bar G(u_t)) + \beta_t \widehat C(u_t) C_0^{-1} (u_t^{(j)}-\bar u_t) \,,
    \end{split}
    \end{equation}
with $0\leq\rho_t\le 1$ and $0\leq \beta_t\le 1$ for all $t\geq0$. This incorporation of covariance inflation may be seen as artificial. However, it has a good intuition from an optimisation point of view. While in the original EKI each particle is driven by its own direction, the formulation \eqref{eqn:dyn_varinfl} additionally moves each particle into a joint direction. This interpretation can be seen more clearly when assuming $\rho_t=\beta_t$ and rewriting \eqref{eq:cov_force} by
\begin{align*}
     p_{\rho,\beta}(u_t^{(j)}) &= -(1-\rho_t)\left[\widehat{C}^{u,G}_t \Gamma^{-1} ({G}(u_t^{(j)})-y)+\widehat{C}({u}_t) C_0^{-1}{u}^{(j)}_t\right]\\
     &\quad - \rho_t \left[\widehat{C}^{u,G}_t \Gamma^{-1} (\bar {G}(u_t)-y)+\widehat{C}({u}_t) C_0^{-1}\bar {u}_t\right]\, ,
\end{align*}
where the latter term is equal for all $j=1,\dots,J$. In our theoretical analysis, we will consider an inflation factor $\rho_t\to 1$ as $t\rightarrow\infty$ and turn of the inflation for the regularisation term through setting $\beta_t=0$. Note that the presented results straightforwardly extend to $\beta_t>0$. As stated above the evolution of the ensemble mean remains
\begin{align*}
    \frac{\mathrm{d} \bar{u}_t}{\mathrm{d}t}&= -\widehat{C}^{u,G}_t \Gamma^{-1} (\bar{G}(u_t)-y)-\widehat{C}({u}_t) C_0^{-1}\bar{u}_t+\frac{1}{\tau}\widehat{C}(u_t)\sum_{i=1}^{m}\frac{1}{h_i(\bar{u}_t)}\nabla h_i(\bar{u}_t)\\
    &= %v(\bar{u}_t)
    v(u_t)+\frac{1}{\tau}\widehat{C}(u_t)\sum_{i=1}^{m}\frac{1}{h_i(\bar{u}_t)}\nabla h_i(\bar{u}_t)\,, 
\end{align*}
where we defined 
\begin{equation} \label{eq:drift_force}
v(u_t) := -\widehat{C}^{u,G}_t \Gamma^{-1} (\bar{G}(u_t)-y)-\widehat{C}({u}_t) C_0^{-1}\bar{u}_t\, .
\end{equation}
However, the evolution of the ensemble spread changes to 
\begin{align*}
    \frac{\mathrm{d} (u^{(j)}_t-\bar{u}_t)}{\mathrm{d}t}= -(1-\rho_t)\widehat{C}^{u,G}_t \Gamma^{-1} (G(u^{(j)}_t)-\bar{G}(u_t))-\widehat{C}(u_t) C_0^{-1}(u^{(j)}_t-\bar{u}_t).
\end{align*}
We emphasize that the solutions of the inflated flow obviously still satisfy the subspace property (see \cite{Weissmann_2022}). 
Note again, the dynamics of the centered particles with and without covariance inflation are independent of $\tau$, which will later lead to a speed of collapse independent of $\tau$.

\section{Ensemble collapse and well-posedness} \label{sec:feasibity_and_ek}
We define the centered particles as $e^{(j)}_t=u^{(j)}_t-\bar{u}_t.$ Next, we define the following Lyapunov function $V_e(t)=\frac{1}{J}\sum_{j=1}^J\frac{1}{2}\|e_t^{(j)}\|^2$ describing the deviation of the ensemble of particles from its mean and we will analyse its behaviour. To be more precise, we will prove that the ensemble collapse with rate $V_e(t) \in \mathcal O(1/t)$.

\begin{lemma}[upper bound] \label{lemma:bound_enskol}
    Let $(u_0^{(j)})_{j= 1,\dots,J}\in\Omega$ be the %linearly independent 
    initial particle system and let $(u_t^{(j)}, t\ge0)$, $j=1,\dots,J$, be a solution of \eqref{eqn:dyn_varinfl}. Then for all $t\ge0$ it holds true that
    \[V_e(t)\leq \frac{1}{\frac{2\sigma_{min}}{J}t+V_e(0)^{-1}}\, .\]
\end{lemma}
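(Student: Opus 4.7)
The plan is to differentiate $V_e$ along the centered-particle dynamics, discard a non-positive contribution, and reduce the estimate to a Bernoulli-type scalar inequality of the form $\dot V_e \le -c\, V_e^2$.

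First I would derive the evolution of the centered particles $e_t^{(j)} := u_t^{(j)} - \bar u_t$. Subtracting the mean equation from the particle equation, the $\log$-barrier term and the common drift $v(u_t)$ cancel (this is precisely what the covariance inflation in \eqref{eq:cov_force} was designed to ensure), leaving
\[
\dot e_t^{(j)} = -(1-\rho_t)\,\widehat C^{u,G}_t\Gamma^{-1}\bigl(G(u_t^{(j)}) - \bar G(u_t)\bigr) - \widehat C(u_t)\, C_0^{-1} e_t^{(j)}.
\]

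I would then compute $\dot V_e(t) = \frac{1}{J}\sum_j \langle e_t^{(j)}, \dot e_t^{(j)}\rangle$. Using the outer-product representations $\widehat C^{u,G}_t = \frac{1}{J}\sum_j e_t^{(j)}\otimes(G(u_t^{(j)})-\bar G(u_t))$ and $\widehat C(u_t) = \frac{1}{J}\sum_j e_t^{(j)}\otimes e_t^{(j)}$, both sums collapse into traces:
\[
\dot V_e(t) = -(1-\rho_t)\operatorname{tr}\bigl((\widehat C^{u,G}_t)^\top \widehat C^{u,G}_t\Gamma^{-1}\bigr) - \operatorname{tr}\bigl(\widehat C(u_t)^2 C_0^{-1}\bigr).
\]
The first trace is non-negative since $\Gamma^{-1}\succ 0$ and $\rho_t\in[0,1]$, so I would simply discard it.

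For the remaining term I would use $C_0^{-1}\succeq \sigma_{min} I$ to get $\operatorname{tr}(\widehat C(u_t)^2 C_0^{-1}) \ge \sigma_{min}\operatorname{tr}(\widehat C(u_t)^2)$. Because the centered particles sum to zero, $\widehat C(u_t)$ is symmetric positive semi-definite with rank at most $J$, and Cauchy--Schwarz applied to its nonzero eigenvalues yields
\[
\operatorname{tr}(\widehat C(u_t)^2) \ge \frac{(\operatorname{tr}\widehat C(u_t))^2}{J} = \frac{4\, V_e(t)^2}{J},
\]
since $\operatorname{tr}\widehat C(u_t) = \tfrac{1}{J}\sum_j\|e_t^{(j)}\|^2 = 2V_e(t)$. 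Combining the two bounds gives $\dot V_e(t) \le -(4\sigma_{min}/J)\, V_e(t)^2$, which in particular implies $\dot V_e(t) \le -(2\sigma_{min}/J)\, V_e(t)^2$. To finish, I would integrate this scalar ODE: since $\frac{d}{dt} V_e(t)^{-1} \ge 2\sigma_{min}/J$, integration from $0$ to $t$ yields $V_e(t)^{-1} \ge \tfrac{2\sigma_{min}}{J}t + V_e(0)^{-1}$, which rearranges to the claim.

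The step I expect to be the main obstacle is obtaining the quadratic-in-$V_e$ lower bound on $\operatorname{tr}(\widehat C(u_t)^2)$. Without exploiting the low-rank structure of the empirical covariance one would only obtain a linear-in-$V_e$ bound, which would yield merely exponential decay rather than the algebraic $1/t$ collapse rate claimed in the statement.
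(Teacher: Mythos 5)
Your proposal is correct and follows essentially the same route as the paper: differentiate $V_e$ along the centered-particle dynamics (where the barrier term and common drift cancel), discard the non-positive cross-covariance contribution, bound the remaining term below by a quantity quadratic in $V_e$ via the smallest eigenvalue of $C_0^{-1}$ together with the low-rank Cauchy--Schwarz estimate $\operatorname{tr}(\widehat C(u_t)^2)\ge (\operatorname{tr}\widehat C(u_t))^2/J$, and integrate the resulting Bernoulli inequality. Your trace formulation is simply a cleaner writeup of the paper's double-sum/Frobenius-norm computation (and your constant $4\sigma_{min}/J$ is in fact slightly sharper than the $2\sigma_{min}/J$ stated, so the claimed bound follows), with $\sigma_{min}$ read, as intended, as the smallest eigenvalue of $C_0^{-1}$.
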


\begin{proof}
The outline of the proof follows similarly to \cite[Lemma~4.3]{Weissmann_2022}. The time evolution of $V_e$ is given by
\begin{align*}
    \frac{\mathrm{d} V_e(t)}{\mathrm{d}t}=&\frac{2}{J}\sum_{j=1}^J \langle u_t^{(j)}-\bar{u}_t,\frac{\mathrm{d} (u^{(j)}_t-\bar{u}_t)}{\mathrm{d}t}\rangle\\
    %=&-\frac{2}{J}\sum_{j=1}^J \langle u_t^{(j)}-\bar{u}_t,(1-\rho_t)\widehat{C}^{u,G}_t \Gamma^{-1} (G(u^{(j)}_t)-\bar{G}(u_t))\rangle\\
    %&-\frac{2}{J}\sum_{j=1}^J \langle u_t^{(j)}-\bar{u}_t,\widehat{C}(u_t) C_0^{-1}(u^{(j)}_t-\bar{u}_t)\rangle\\
    =&-(1-\rho_t)\frac{2}{J^2}\sum_{k,j=1}^J \langle u_t^{(j)}-\bar{u}_t,(u_t^{(k)}-\bar{u}_t)(G(u^{(k)}_t)-\bar{G}(u_t))^T\Gamma^{-1} (G(u^{(j)}_t)-\bar{G}(u_t))\rangle\\
    &-\frac{2}{J^2}\sum_{k,j=1}^J \langle u_t^{(j)}-\bar{u}_t,(u_t^{(k)}-\bar{u}_t)(u_t^{(k)}-\bar{u}_t)^T C_0^{-1}(u^{(j)}_t-\bar{u}_t)\rangle\,.  
\end{align*}
We observe that 
\[ \frac{1}{J^2}\sum_{k,j=1}^J \langle u_t^{(j)}-\bar{u}_t,(u_t^{(k)}-\bar{u}_t)(G(u^{(k)}_t)-\bar{G}(u_t))^T\Gamma^{-1} (G(u^{(j)}_t)-\bar{G}(u_t))\rangle =\|\widehat{C}^{u,G}_t\Gamma^{-1/2}\|_{F}^2 \]
and similarly 
\[ \frac{2}{J^2}\sum_{k,j=1}^J \langle u_t^{(j)}-\bar{u}_t,u_t^{(k)}-\bar{u}_t\rangle\langle u_t^{(k)}-\bar{u}_t C_0^{-1}(u^{(j)}_t-\bar{u}_t)\rangle = \|\widehat C_t(u_t) C_0^{-1/2}\|_{F}^2 \]
such that 
\begin{align*}
    \frac{\mathrm{d} V_e(t)}{\mathrm{d}t}=&-2(1-\rho_t)\|\widehat{C}^{u,G}_t\Gamma^{-1/2}\|_{F}^2-2\|\widehat C_t(u_t) C_0^{-1/2}\|_{F}^2\\
    \leq&\frac{2\sigma_{min}}{J}\left(\frac{1}{J}\sum_{j=1}^J\|u^{(j)}_t-\bar{u}_t\|^2\right)^2=\frac{2\sigma_{min}}{J} V_e(t)^2,
\end{align*}
where $\sigma_{min}$ denotes the smallest eigenvalue of $C_0$. Then the claim follows by Lyapunov theory.
\end{proof}
As discussed above, in order to prove convergence of the scheme as optimisation method we will need to lower bound the covariance matrix which is used as preconditioner. We will provide the following lower bound on the smallest eigenvalue of $(\widehat C(u_t))_{t\ge0}$ which prevents the ensemble to collapse too fast. 
\begin{lemma}[lower bound]\label{lemma:ens_col_lower}
    %Bound on smallest eigenvalue.
    Let $(u_0^{(j)})_{j= 1,\dots,J}\in\Omega$ be the 
    %linearly independent 
    initial particle system and let $(u_t^{(j)}, t\ge0)$, $j=1,\dots,J$, be a solution of \eqref{eqn:dyn_varinfl}.
    Moreover, we assume that
    \[\eta_0 := \min_{z\in \mathcal S, \|z\|=1}\langle z,C(u_0),z\rangle >0.\]
    Then, for each $z\in \mathcal S$ with $\|z\|=1$ there exists a $t_\circ>0$ such that.
    \begin{equation*}
        \langle z, \widehat{C}(u_t)z\rangle \ge \frac{w}{(c+w\max(k_1,0)) (t+b)}
    \end{equation*}
for all $t\ge t_\circ$. The constants are $a=\frac{(1-\rho_{t_\circ})c_{lip}^2 \lambda_{max}J}{\sigma_{min}}, b=\frac{V_e(0)^{-1}J}{2\sigma_{min}}, c=2\sigma_{max}$, $k_1 = \frac{(1-a)-\eta_{t_\circ} cb }{(1-a) b^a \eta_{t_\circ}} $ and $w=1-a>0$.
\end{lemma}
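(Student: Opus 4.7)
The plan is to derive a Bernoulli-type differential inequality for $\eta_t(z) := \langle z, \widehat C(u_t) z\rangle$ and then reduce it, via the substitution $f(t)=1/\eta_t(z)$, to a linear ODE solvable by an integrating factor. The first observation is that the log-barrier force in \eqref{eqn:dyn_varinfl} is identical for every particle, so it cancels in the centered dynamics and $e_t^{(j)}=u_t^{(j)}-\bar u_t$ obeys the same equation used in the proof of Lemma~\ref{lemma:bound_enskol}. Differentiating $\eta_t(z)=\tfrac{1}{J}\sum_j \langle z, e_t^{(j)}\rangle^2$ along this flow and regrouping the sums into covariance expressions (exactly as in that proof) yields
\begin{equation*}
\dot\eta_t(z) \;=\; -2(1-\rho_t)\,\bigl\|(\widehat C^{u,G}_t)^\top z\bigr\|_\Gamma^2 \;-\; 2\,\langle \widehat C(u_t) z, C_0^{-1} \widehat C(u_t) z\rangle.
\end{equation*}

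Next I would lower-bound each contribution. For the Lipschitz-type term, Assumption~\ref{ass:linear_apprx} gives the decomposition $\widehat C^{u,G}_t = \widehat C(u_t)\, DG(\bar u_t)^\top + R_t$ with $\|R_t\|$ of higher order in $V_e(t)$; combining $\|DG(\bar u_t)\|\le c_{lip}$ with the spectral inequality $\|\widehat C(u_t) z\|^2 \le \|\widehat C(u_t)\|\,\eta_t(z) \le 2V_e(t)\,\eta_t(z)$ and the upper bound $V_e(t)\le J/(2\sigma_{\min}(t+b))$ from Lemma~\ref{lemma:bound_enskol}, this term is bounded below by $-\tfrac{a}{t+b}\eta_t(z)$, provided $t_\circ$ is chosen so that $(1-\rho_t)\le (1-\rho_{t_\circ})$ for $t\ge t_\circ$ and $a<1$. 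For the regularization term, starting from $\langle \widehat C(u_t) z, C_0^{-1}\widehat C(u_t) z\rangle \le \sigma_{\min}^{-1}\|\widehat C(u_t) z\|^2$ and exploiting the intrinsic Riccati-like identity $\dot{\widehat C}(u_t)=-2\widehat C(u_t) B_t \widehat C(u_t) + \text{res}$, with $B_t=(1-\rho_t)DG^\top\Gamma^{-1}DG + C_0^{-1}$, one extracts the genuine quadratic dependence $-c\,\eta_t(z)^2$ with $c=2\sigma_{\max}$. Assembled, this yields for $t\ge t_\circ$
\begin{equation*}
\dot\eta_t(z) \;\ge\; -\frac{a}{t+b}\,\eta_t(z) \;-\; c\,\eta_t(z)^2.
\end{equation*}

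Setting $f(t)=1/\eta_t(z)$ turns this Bernoulli inequality into the scalar linear inequality $\dot f \le \tfrac{a}{t+b}f + c$; multiplying by the integrating factor $(t+b)^{-a}$ and integrating from $t_\circ$ (for definiteness $t_\circ=0$, a reduction achieved by the time shift implicit in the definition of $b$) gives, using $a<1$,
\begin{equation*}
f(t) \;\le\; k_1\,(t+b)^a \;+\; \tfrac{c}{1-a}(t+b),
\end{equation*}
where $k_1$ matches exactly the constant displayed in the statement, as can be verified by expanding the integration constant $\frac{1}{b^a\eta_{t_\circ}}-\frac{cb^{1-a}}{1-a}$. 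Since $(t+b)^a\le (t+b)$ for $a<1$ and $t+b\ge1$, and since $f(t)>0$ allows replacing $k_1$ by $\max(k_1,0)$, we conclude $f(t)\le\tfrac{c+w\max(k_1,0)}{w}(t+b)$ with $w=1-a$; inversion produces the stated lower bound. The main difficulty is the production of the $-c\,\eta_t(z)^2$ term with the correct constant: a crude operator-norm estimate only yields $-CV_e(t)\,\eta_t(z)\sim-\eta_t(z)/(t+b)$, which combined with the Lipschitz contribution would give merely a polynomial decay of order $(t+b)^{-(a+\varepsilon)}$; squeezing out the true quadratic $\eta_t^2$ responsible for the clean $1/(t+b)$ rate is the heart of the argument and requires the Riccati identity rather than purely spectral bounds.
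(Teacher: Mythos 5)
Your overall strategy (differentiate a scalar quantity along the covariance dynamics, reach $\dot\eta \ge -\tfrac{a}{t+b}\eta - c\eta^2$, then solve the Bernoulli-type inequality) is the same as the paper's, and your final ODE step is fine: it is essentially Lemma~\ref{alem:ode_lower_bound}, derived by the substitution $f=1/\eta$ instead of direct verification. The genuine gap sits exactly at the step you yourself call the heart of the argument: extracting the quadratic term $-c\,\eta_t(z)^2$ with $c=2\sigma_{\max}$ for a \emph{fixed} unit direction $z$. From $\langle z,\dot{\widehat C}(u_t)z\rangle$ the $C_0$-term gives $-2\langle \widehat C(u_t)z, C_0^{-1}\widehat C(u_t)z\rangle \ge -2\sigma_{\max}\|\widehat C(u_t)z\|^2$, but for a unit vector $z$ one has $\|\widehat C(u_t)z\|^2 \ge \langle z,\widehat C(u_t)z\rangle^2$, and the ratio can be arbitrarily large when $z$ has a small component along a large eigendirection of $\widehat C(u_t)$. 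So replacing $\|\widehat C(u_t)z\|^2$ by $\eta_t(z)^2$ goes the wrong way for a lower bound on $\dot\eta_t(z)$, and no ``Riccati identity'' rescues this: the identity $\dot{\widehat C}(u_t)=-2(1-\rho_t)\widehat C^{u,G}_t\Gamma^{-1}\widehat C^{G,u}_t-2\widehat C(u_t)C_0^{-1}\widehat C(u_t)$ is precisely what you started from, and pairing it with a fixed $z$ only yields either the crude linear-in-$\eta_t(z)$ estimate you dismiss (with coefficient proportional to $V_e(t)$, hence an exponent that is in general larger than one and useless for the later convergence proof) or an upper rather than lower bound. As written, the key differential inequality for $\eta_t(z)$ is therefore not established.

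The missing idea, which is how the paper proceeds, is to not fix $z$ at all: set $\eta_t=\lambda_{\min}(\widehat C(u_t))$ (on $\mathcal S$) with unit eigenvector $\phi(t)$. Since $\langle\phi(t),\dot\phi(t)\rangle=0$, one gets $\dot\eta_t=\langle\phi(t),\dot{\widehat C}(u_t)\phi(t)\rangle$, and crucially $\|\widehat C(u_t)\phi(t)\|^2=\eta_t^2$ holds \emph{exactly} because $\phi(t)$ is an eigenvector, which is what produces $-2\sigma_{\max}\eta_t^2$ with the stated constant. The first term is then bounded by Cauchy--Schwarz and the local Lipschitz continuity of $G$ alone, giving $\|\widehat C^{G,u}_t\phi(t)\|^2_\Gamma\le c_{lip}^2\lambda_{\max}V_e(t)\,\eta_t$; your detour through Assumption~\ref{ass:linear_apprx} and the decomposition $\widehat C^{u,G}_t=\widehat C(u_t)DG(\bar u_t)^\top+R_t$ is unnecessary and would saddle you with residual terms you never control. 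The statement for an arbitrary unit $z\in\mathcal S$ then follows immediately from $\langle z,\widehat C(u_t)z\rangle\ge\eta_t$. Without this eigenvector device (or an equivalent substitute, which your proposal does not supply), the proof does not go through.
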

\begin{proof}   
    The outline of the proof follows similarly to \cite[Lemma~4.4]{Weissmann_2022}, where include an adaptation for allowing adaptive covariance inflation $(\rho_t,t\ge0)$. Consider the dynamics of the empirical covariance matrix
    \begin{align*}
        \frac{\mathrm{d} \widehat{C}(u_t)}{\mathrm{d} t}&=\frac{1}{J}\sum_{j=1}^J\frac{\mathrm{d}  e_t^{(j)}}{\mathrm{d} t}\left(e_t^{(j)}\right)^T+\frac{1}{J}\sum_{j=1}^Je_t^{(j)}\left(\frac{\mathrm{d}  e_t^{(j)}}{\mathrm{d} t}\right)^T\\
        &=-2(1-\rho_t) \widehat{C}^{u,G}_t \Gamma^{-1}\widehat{C}^{G,u}_t -2 \widehat{C}(u_t)C_0^{-1}\widehat{C}(u_t)\, .
    \end{align*}
    Next let $z\in \mathcal{S}$ with $\|z\|=1$, then we note that
    \begin{equation} \label{eqn:dyn_cov_scalar}
        \langle z,\frac{\mathrm{d} \widehat{C}(u_t)}{\mathrm{d} t}z\rangle=-2(1-\rho_t)\|\widehat{C}^{G,u}_t z\|^2_{\Gamma}-2\|\widehat{C}(u_t)z\|_{C_0}\, . 
    \end{equation}
    Observe that 
    \begin{align*}
        \|\widehat C_t^{G,u} z\|_\Gamma^2 \le \|\widehat{C}^{G,u}_tz\|^2\|\Gamma^{-1}\|\le \|\widehat{C}^{G,u}_tz\|^2\lambda_{\max}\, ,
    \end{align*}
    where $\lambda_{\max}$ denotes the largest eigenvalue of $\Gamma^{-1}$. We apply Cauchy-Schwarz and Hölder's inequality to derive the following bound
    \begin{equation*}\label{eqn:estimate_crossc}
        \begin{split}
        \|\widehat{C}^{G,u}z\|^2 = \langle \widehat{C}^{G,u}z,\widehat{C}^{G,u}z\rangle &= \langle \frac1J\sum_{j=1}^J \langle u^{(j)}-\bar u,z\rangle \rangle (G(u^{(j)})-\bar G) , \frac1J\sum_{i=1}\langle u^{(i)}-\bar u,z\rangle \rangle (G(u^{(i)})-\bar G) \rangle \\
        &=\frac1{J^2} \sum_{i,j=1}^J \langle u^{(j)}-\bar u,z\rangle \langle u^{(i)}-\bar u,z\rangle \langle G(u^{(j)})-\bar G,G(u^{(i)})-\bar G\rangle\\
        &\le \frac1{J^2} \sum_{i,j=1}^J \langle u^{(j)}-\bar u,z\rangle \langle u^{(i)}-\bar u,z\rangle \| G(u^{(j)})-\bar G\| \, \|G(u^{(i)})-\bar G\|\\
        &=\left(\frac1J\sum_{j=1}^J\langle u^{(j)}-\bar u,z\rangle \| G(u^{(j)})-\bar G\| \right)^2\\
        &\le \left(\left(\frac1J\sum_{j=1}^J\langle u^{(j)}-\bar u,z\rangle^2\right)^{\frac12} \left(\frac1J\sum_{j=1}^J\| G(u^{(j)})-\bar G\|^2 \right)^{\frac12}\right)^2\,,
        \end{split}
    \end{equation*}
    which implies that
    \[\|\widehat C_t^{G,u} z\|_\Gamma^2 \le c_{lip}^2\lambda_{\max} V_e(t) \langle z, C(u) z\rangle\, ,\]
    since
    $$\langle u^{(j)}-\bar u,z\rangle^2=z^T (u^{(j)}-\bar u)(u^{(j)}-\bar u)^Tz.$$
    We denote by $\eta_t$ the smallest eigenvalue of $\widehat{C}(u_t)$ and by $\phi(t)$ the corresponding unit-norm eigenvector. We will show that $\eta_t>0$ for all $t\geq 0$. The following holds
    \begin{align*}
        0=\frac{\mathrm{d} \|\phi(t)\|^2}{\mathrm{d} t}=2\langle \phi(t), \frac{\mathrm{d} \phi(t)}{\mathrm{d} t}\rangle\, ,
    \end{align*}
    and hence,
    \begin{align*}
      \frac{\mathrm{d} \eta_t}{\mathrm{d} t}=& \frac{\mathrm{d} \langle \phi(t),\widehat{C}(u_t)\phi(t)\rangle}{\mathrm{d} t} \\
      =&\langle \Phi(t),\frac{\mathrm{d} \widehat{C}(u_t)\Phi(t)}{\mathrm{d} t}\rangle + \langle \frac{\mathrm{d} \Phi(t)}{\mathrm{d} t}, \widehat{C}(u_t)\Phi(t)\rangle\\
      =&\langle \Phi(t),\frac{\mathrm{d} \widehat{C}(u_t)}{\mathrm{d} t}\Phi(t)\rangle + 2\eta_t\langle \frac{\mathrm{d} \Phi(t)}{\mathrm{d} t}, \Phi(t)\rangle\\
      \geq& - 2 (1-\rho_t) c_{lip}^2 V_e(t)\lambda_{max} \langle \Phi(t),\widehat{C}(u_t)\Phi(t)\rangle - 2\sigma_{\max} \|\widehat{C}(u_t)\Phi(t)\|^2,\\
      =& - 2 (1-\rho_t) c_{lip}^2 V_e(t)\lambda_{max} \eta_t - 2\sigma_{\max} \eta_t^2\\
      \geq& -\frac{2(1-\rho_t) c_{lip}^2 \lambda_{max}}{\frac{2\sigma_{min}}{J}t+V_e(0)^{-1}}\eta_t-2\sigma_{\max} \eta_t^2\, ,
    \end{align*}
    where we used in the third step the symmetry of $\widehat{C}(u_t)$ and in the last Lemma~\ref{lemma:bound_enskol}.
    Hence we have an ODE of the form
    \begin{align*}
        \frac{\mathrm{d} \eta_t}{\mathrm{d} t}\geq -\frac{a_t}{t+b}\eta_t-c\eta_t^2,
    \end{align*}
    where $a_t=\frac{(1-\rho_t)c_{lip}^2 \lambda_{max}J}{\sigma_{min}}, b=\frac{V_e(0)^{-1}J}{2\sigma_{min}}, c=2\sigma_{max}$. Since $\rho_t\rightarrow 1$ as $t\rightarrow\infty$, there exists a $t_\circ$ such that $a_t\le a\in(0,1)$ for all $t\ge t_{\circ}$.
    The solution of the ODE
\begin{equation*}
    \frac{{\mathrm d}x(t)}{{\mathrm d}t} = - \frac{a}{t+b} x(t) - c x(t)^2
\end{equation*}
is given by
\begin{equation*}
    x(t) = \frac{1-a}{c(t+b) + (1-a) k_1 (t+b)^{a}},\quad x(0) = \eta_{t_{\circ}}>0\, ,
\end{equation*}
where $k_1$ is chosen such that the initial condition is satisfied (see Lemma~\ref{alem:ode_lower_bound}).
Since %$\rho_t\rightarrow 1$ as $t\rightarrow\infty$, there exists a $t_\circ$ such that 
$a\in(0,1)$ we have that $x(t)>0$ for all $t\geq 0$. Hence, we can run our ODE in time and choose $t_\circ$ as our initial starting point. For such values of $a$ we have that $(t+b)^{a} \le (t+b)$ such that we obtain the lower bound
\begin{equation*}
    x(t) \ge \frac{1-a}{c(b+t) + (1-a) \max(k_1,0) (t+b)} = \frac{w}{(c+w\max(k_1,0)) (t+b)},
\end{equation*}
where $w=1-a>0$, which guarantees by comparison argument that
\begin{equation*}
    \eta_t \ge \frac{w}{(c+w\max(k_1,0)) (t+b)}
\end{equation*}
for all $t\ge t_\circ$.
\end{proof}

\begin{remark}
    We emphasize that the constants describing the upper and lower bound of the ensemble collapse as well as $t_\circ$ from Lemma~\ref{lemma:ens_col_lower} are independent of the penalty parameter $\tau$. 
\end{remark}
Our next result shows that any solution $u(t)$ of \eqref{eqn:dyn_varinfl}, remains in the feasible set $\Omega$ for all $t\geq0$ and therefore, there exists a unique global solution. We provide the proof of the statement in Appendix~\ref{app:A2}.
\begin{proposition}[existence] \label{lemma:ens_in_box}
    Let $(u_0^{(j)})_{j= 1,\dots,J}$ 
    be an initial particle system satisfying Assumption~\ref{ass:feasibility}. We assume that the feasible set is bounded, i.e.~that there exists $R>0$ such that $\Omega\subseteq \mathcal B_R(0)$. Moreover, we assume that $\|\nabla h_i(u)\| \le C(R)$ for some constant $C(R)>0$ depending on $R$ and all $u\in\mathcal B_R(0)$. Then the mean of any solution $(u_t^{(j)}, t\ge0)$ of \eqref{eqn:dyn_varinfl} remains in the feasible set $\Omega$, i.e.~$\bar u_t\in\Omega$ for all $t\ge0$. Moreover, there exists a unique solution of \eqref{eqn:dyn_varinfl}.
    %The solutions $(u_t^{(j)})_{j \in J}$  of \eqref{eqn:dyn_varinfl} lie in the Box $\Omega$ for all $j\in J$ and $t\geq0$.
\end{proposition}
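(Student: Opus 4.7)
The plan is to combine Picard-Lindelöf local existence on the open set where the log-barrier is finite with a Lyapunov argument preventing the mean from reaching $\partial\Omega$ in finite time.

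First, I observe that on the open set $U := \{(u^{(1)},\dots,u^{(J)})\in X^J : h_i(\bar u)<0\text{ for all }i\}$, which is non-empty by Assumption~\ref{ass:feasibility}, the RHS of \eqref{eqn:dyn_varinfl} is locally Lipschitz: the empirical (cross-)covariances are polynomial in the particles, $G$ is locally Lipschitz by Assumption~\ref{ass:linear_apprx}, and each factor $1/h_i(\bar u)$ together with $\nabla h_i(\bar u)$ is smooth on $U$. Picard-Lindelöf then yields a unique maximal solution on some interval $[0,T^*)$, which extends as long as the particle configuration stays inside a compact subset of $U$.

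Second, I argue by contradiction that $T^*=\infty$ and $\bar u_t\in\mathrm{int}(\Omega)$ throughout. Suppose $T^*<\infty$. Since $\bar u_t\in\Omega\subset\mathcal B_R(0)$ and since $\|u_t^{(j)}-\bar u_t\|$ is controlled by Lemma~\ref{lemma:bound_enskol}, the only way the solution can leave every compact subset of $U$ is through $\min_i|h_i(\bar u_t)|\to 0$, i.e., $B(\bar u_t):=-\sum_i \log(-h_i(\bar u_t))\to\infty$ as $t\uparrow T^*$. The key observation is that the barrier term of the mean dynamics equals $-\tau^{-1}\widehat C(u_t)\nabla B(\bar u_t)$, so the chain rule combined with \eqref{eq:drift_force} yields
\[
\frac{\mathrm{d}B(\bar u_t)}{\mathrm{d}t}=\langle\nabla B(\bar u_t),v(u_t)\rangle-\frac{1}{\tau}\langle\nabla B(\bar u_t),\widehat C(u_t)\nabla B(\bar u_t)\rangle.
\]
Since $v(u_t)$ lies in $\mathrm{range}(\widehat C(u_t))=\spann\{u_t^{(j)}-\bar u_t\}$ by construction of the (cross-)covariances, Young's inequality absorbs the cross term into the barrier dissipation and leaves an estimate of the form $\frac{\mathrm{d}B}{\mathrm{d}t}\le \frac{\tau}{2}\langle v(u_t),(\widehat C(u_t))^+ v(u_t)\rangle$, where $(\widehat C(u_t))^+$ denotes the Moore-Penrose pseudo-inverse on the range.

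The main obstacle is then to bound this right-hand side uniformly on $[0,T^*)$. The numerator is controlled: $\bar u_t\in\mathcal B_R(0)$, the a priori spread bound $V_e(t)\le V_e(0)$ from Lemma~\ref{lemma:bound_enskol}, and local Lipschitzness of $G$ on the resulting compact ball together give $\|v(u_t)\|\le M$. For the pseudo-inverse, I invoke the lower bound $\eta_t\gtrsim 1/(t+b)$ on the smallest eigenvalue of $\widehat C(u_t)|_{\mathcal S}$ from Lemma~\ref{lemma:ens_col_lower} for $t\ge t_\circ$, together with continuity of $t\mapsto\eta_t$ and the assumed positivity $\eta_0>0$ on the compact interval $[0,t_\circ]$. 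This yields $\langle v(u_t),(\widehat C(u_t))^+ v(u_t)\rangle\lesssim (t+b)M^2$, and integrating gives $B(\bar u_t)\le B(\bar u_0)+P(T^*)$ for a polynomial $P$; since $T^*<\infty$ this bound is finite, contradicting $B(\bar u_t)\to\infty$. Hence $T^*=\infty$, $\bar u_t\in\mathrm{int}(\Omega)\subseteq\Omega$ for all $t\ge 0$, and uniqueness on $[0,\infty)$ follows from Picard-Lindelöf applied on this invariant region.
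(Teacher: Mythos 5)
Your proposal is correct in substance but takes a genuinely different route from the paper's proof. The paper works constraint-by-constraint on $h(\bar u_t)$ itself: it assumes a first hitting time $t_{\min}$ with $h(\bar u_{t_{\min}})=0$, bounds the drift $\langle \nabla h(\bar u_t), v(u_t)\rangle$ uniformly on $[0,t_{\min}]$ (via Lemma~\ref{lemma:bound_enskol}, local Lipschitz continuity of $G$ and $\|\nabla h_i\|\le C(R)$), and shows that the barrier contribution $\tfrac{1}{\tau}\tfrac{1}{h(\bar u_t)}\langle \nabla h(\bar u_t),\widehat C(u_t)\nabla h(\bar u_t)\rangle$ diverges to $-\infty$ as $t\uparrow t_{\min}$ (using $\nabla h(\bar u_{t_{\min}})\neq 0$ and the covariance lower bound near $t_{\min}$), contradicting the sign of $\tfrac{\mathrm d}{\mathrm dt}h(\bar u_t)$ needed to cross the boundary; existence and uniqueness are then appended exactly as in your first step. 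You instead run a Lyapunov/Gr\"onwall estimate on the aggregated barrier $B(\bar u_t)=-\sum_i\log(-h_i(\bar u_t))$: the structural fact that $v(u_t)\in\mathrm{range}(\widehat C(u_t))=\spann\{u_t^{(j)}-\bar u_t\}$ lets Young's inequality absorb $\langle \nabla B,v\rangle$ into the dissipation $\tfrac1\tau\langle\nabla B,\widehat C\nabla B\rangle$, leaving $\tfrac{\mathrm d}{\mathrm dt}B(\bar u_t)\le\tfrac{\tau}{2}\langle v(u_t),\widehat C(u_t)^{+}v(u_t)\rangle$, which is integrable on finite intervals once the restricted smallest eigenvalue $\eta_t$ is bounded below. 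Your route treats all constraints simultaneously, yields a quantitative bound on how close $\bar u_t$ can come to $\partial\Omega$ in finite time, and avoids the delicate limit at the hitting time; its price is that you need the eigenvalue lower bound on the entire maximal interval $[0,T^*)$, whereas the paper only needs non-degeneracy of the quadratic form near the hitting time.

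One justification should be tightened. On $[0,\min(t_\circ,T^*))$ you deduce $\inf_t \eta_t>0$ from ``continuity of $\eta_t$ and $\eta_0>0$''; continuity alone does not rule out $\eta_t$ reaching zero before $t_\circ$. The correct source is the Riccati-type inequality $\tfrac{\mathrm d\eta_t}{\mathrm dt}\ge -\tfrac{a_t}{t+b}\eta_t-c\,\eta_t^2$ established in the proof of Lemma~\ref{lemma:ens_col_lower}, which holds for all $t$ with $a_t$ uniformly bounded (since $\rho_t\in[0,1]$) and therefore keeps $\eta_t$ strictly positive, with an explicit lower bound, on every finite interval; with that substitution your argument closes. (A shared gloss with the paper: Picard--Lindel\"of needs $\nabla h_i$ locally Lipschitz, while Assumption~\ref{ass:inequality_constr} only guarantees continuity of $\nabla h_i$; the paper's uniqueness claim relies on the same implicit strengthening.)
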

    Due to Proposition~\ref{lemma:ens_in_box} under Assumption~\ref{ass:feasibility} the mean of computed solutions is always feasible and bounded. Hence, we also obtain that %the following assumption which is always satisfied when Assumption~\ref{ass:feasibility} is satisfied.} 
%\begin{assumption}\label{ass:feasibility_solution}
 %       We assume that the mean of any solution $(u_t^{(j)},t\ge0)$, $j=1,\dots,J$, of \eqref{eqn:dyn_varinfl} is feasible and uniformly bounded, i.e.~$\bar u_t\in\Omega$ and there exists $B>0$ such that
    %\begin{equation*}
        $\|u_t^{(j)}\|\leq B$
    %\end{equation*}
    for some $B>0$, all $t\ge0$ and $j=1,\dots,J$.
   % \end{assumption}
\section{Gradient flow structure and convergence analysis} \label{sec:main_res}
The dynamics of the particles given by the RHS of \eqref{EKI_Tikho} can be approximated by the preconditioned gradient flow 
\[-\widehat{C}(u_t)\nabla \Phi^{\scaleto{\mathrm{reg}}{5pt}}(u^{(j)})\, .\] This approximation is getting more precise, as the ensemble collapse is happening \protect{\cite[Lemma 4.5, Lemma 6.4]{Weissmann_2022}}. We consider in the following the approximation of \eqref{eqn:smooth_box_eki} for the mean of our particles given in \protect{\cite[Lemma~6.4]{Weissmann_2022}}.

\begin{lemma}[gradient flow approximation] \label{lemma:approx error}
    Consider the ensemble $\{u^{(j)},j=1,...,J\}$ and assume that Assumption~\ref{ass:linear_apprx} holds, then the mean of the particles satisfies
    \begin{equation}\label{eqn:mean_grad_approx}
        \| \widehat{C}^{u,G}_t \Gamma^{-1} (\bar{G}_t-y) -\widehat{C}(u_t)\nabla\Phi(\bar{u})\|\leq b_1 J \sqrt{\Phi(\bar{u})} V_e(t)^{\frac{3}{2}}\, ,
    \end{equation}
    where $b_1$ is independent of $J$.
\end{lemma}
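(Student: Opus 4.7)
The plan is to invoke the linear approximation property from Assumption~\ref{ass:linear_apprx} around $\bar u_t$ and track the residuals carefully. Setting $e^{(j)}:=u_t^{(j)}-\bar u_t$ and $R_j := \text{Res}(u_t^{(j)},\bar u_t)$, Assumption~\ref{ass:linear_apprx} yields
$G(u_t^{(j)}) = G(\bar u_t) + DG(\bar u_t)\,e^{(j)} + R_j$ with $\|R_j\|\le b_{res}\|e^{(j)}\|^2$. Averaging gives $\bar G(u_t) = G(\bar u_t) + \bar R$ where $\bar R := \tfrac1J\sum_j R_j$, hence $G(u_t^{(j)})-\bar G(u_t) = DG(\bar u_t)e^{(j)} + (R_j-\bar R)$ and $\bar G(u_t)-y = (G(\bar u_t)-y) + \bar R$. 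Substituting into the definition of $\widehat C^{u,G}_t$ decomposes it as
\[
\widehat C^{u,G}_t \;=\; \widehat C(u_t)\, DG(\bar u_t)^\top \;+\; \mathcal E_t,\qquad \mathcal E_t := \tfrac1J\sum_{j=1}^J e^{(j)}\otimes (R_j-\bar R).
\]

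Next, I use that $\nabla\Phi(\bar u_t) = DG(\bar u_t)^\top \Gamma^{-1}(G(\bar u_t)-y)$, so
\[
\widehat C^{u,G}_t \Gamma^{-1}(\bar G(u_t)-y) - \widehat C(u_t)\nabla\Phi(\bar u_t) \;=\; \underbrace{\widehat C(u_t)DG(\bar u_t)^\top \Gamma^{-1}\bar R}_{(\mathrm A)} + \underbrace{\mathcal E_t\,\Gamma^{-1}(G(\bar u_t)-y)}_{(\mathrm B)} + \underbrace{\mathcal E_t\,\Gamma^{-1}\bar R}_{(\mathrm C)}.
\]
The principal term $\widehat C(u_t)DG(\bar u_t)^\top\Gamma^{-1}(G(\bar u_t)-y)$ cancels exactly with $\widehat C(u_t)\nabla\Phi(\bar u_t)$, leaving only (A), (B), (C) to estimate.

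For the bounds, I would use the elementary identities $\|\widehat C(u_t)\|_{\rm op}\le \tfrac1J\sum_j \|e^{(j)}\|^2 = 2V_e(t)$ and $\|\bar R\|\le b_{res}\cdot 2V_e(t)$. For $\mathcal E_t$, apply Cauchy--Schwarz and the quadratic residual bound together with a power-mean step:
\[
\|\mathcal E_t\|_{\rm op}\;\le\; \tfrac1J\sum_{j=1}^J \|e^{(j)}\|\bigl(\|R_j\|+\|\bar R\|\bigr) \;\lesssim\; b_{res}\bigl(\tfrac1J\textstyle\sum_j\|e^{(j)}\|^3 + V_e(t)\cdot \tfrac1J\sum_j\|e^{(j)}\|\bigr),
\]
and use $\|e^{(j)}\|\le\sqrt{2J V_e(t)}$ to obtain $\|\mathcal E_t\|_{\rm op}\lesssim b_{res}\sqrt{J}\,V_e(t)^{3/2}$. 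Combined with $\|G(\bar u_t)-y\|\le \sqrt{2\|\Gamma\|_{\rm op}\,\Phi(\bar u_t)}$, this gives (A)$\lesssim V_e(t)^2$, (C)$\lesssim \sqrt J\,V_e(t)^{5/2}$ and the dominant (B)$\lesssim \sqrt J\,V_e(t)^{3/2}\sqrt{\Phi(\bar u_t)}$. Absorbing the lower order (A) and (C) into the leading order using that $V_e(t)$ is bounded (by Lemma~\ref{lemma:bound_enskol}) and bounding $\sqrt J\le J$, one arrives at the claimed estimate with a constant $b_1$ that depends only on $b_{res}$, $\|\Gamma^{-1}\|$, $\|DG(\bar u_t)\|$ and the initial spread $V_e(0)$, but not on $J$.

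The main obstacle is purely bookkeeping: one must carefully isolate the cancellation that produces $\widehat C(u_t)\nabla\Phi(\bar u_t)$ and then track the $J$-dependence of the moments $\tfrac1J\sum_j\|e^{(j)}\|^k$. Nothing deeper than Cauchy--Schwarz and the quadratic Taylor-type bound is needed; the proof essentially mirrors \cite[Lemma~6.4]{Weissmann_2022}, to which the statement refers.
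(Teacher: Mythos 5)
Your decomposition is set up correctly: writing $G(u_t^{(j)})=G(\bar u_t)+DG(\bar u_t)e^{(j)}+R_j$ gives $\widehat C^{u,G}_t=\widehat C(u_t)DG(\bar u_t)^\top+\mathcal E_t$, the principal term cancels against $\widehat C(u_t)\nabla\Phi(\bar u_t)$, and your estimates $\|\widehat C(u_t)\|\le 2V_e(t)$, $\|\bar R\|\le 2b_{res}V_e(t)$, $\|\mathcal E_t\|\lesssim b_{res}\sqrt J\,V_e(t)^{3/2}$ are all fine. The genuine gap is the final ``absorption'' step. Your terms (A) $=\widehat C(u_t)DG(\bar u_t)^\top\Gamma^{-1}\bar R$ and (C) $=\mathcal E_t\Gamma^{-1}\bar R$ carry no factor $\sqrt{\Phi(\bar u_t)}$ whatsoever, and the boundedness of $V_e$ from Lemma~\ref{lemma:bound_enskol} cannot create one: to get $V_e^2\le C\,J\sqrt{\Phi(\bar u_t)}\,V_e^{3/2}$ you would need $\sqrt{V_e(t)}\lesssim J\sqrt{\Phi(\bar u_t)}$, i.e.\ a strictly positive lower bound on $\Phi(\bar u_t)$ along the trajectory, which is neither assumed nor available. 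Concretely, if $G(\bar u_t)=y$ then the right-hand side of \eqref{eqn:mean_grad_approx} vanishes while $\widehat C^{u,G}_t\Gamma^{-1}(\bar G_t-y)=\widehat C^{u,G}_t\Gamma^{-1}\bar R$ is in general of order $V_e^2\neq0$ for nonlinear $G$; so no argument of your form can close without either keeping the extra additive terms or controlling $\Phi(\bar u_t)$ from below. What your computation actually proves is the weaker estimate $\lesssim b_{res}\bigl(\sqrt J\,\sqrt{\Phi(\bar u_t)}\,V_e^{3/2}+V_e^{2}+\sqrt J\,V_e^{5/2}\bigr)$; the additional terms still decay like $V_e^{3/2}$ (using $V_e\le V_e(0)$) and could be propagated through the proof of Theorem~\ref{thm:main_conv}, but that is a different statement than the lemma as written. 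A minor further point: your constant is allowed to depend on $\|DG(\bar u_t)\|$, which is time dependent; you should bound it by the local Lipschitz constant $c_{lip}$ on the bounded feasible set from Proposition~\ref{lemma:ens_in_box}.

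For comparison, the paper itself gives no proof of this lemma; it is quoted from \cite[Lemma~6.4]{Weissmann_2022}. There the $\sqrt{\Phi}$ factor is obtained by linearizing around the particle $u^{(j)}$ rather than around the mean: with $G(u^{(k)})=G(u^{(j)})+DG(u^{(j)})(u^{(k)}-u^{(j)})+Res(u^{(k)},u^{(j)})$ one gets $\widehat C^{u,G}_t\Gamma^{-1}(G(u^{(j)})-y)=\widehat C(u_t)\nabla\Phi(u^{(j)})+\mathcal E_j\Gamma^{-1}(G(u^{(j)})-y)$, i.e.\ a \emph{single} error term in which the data misfit $G(u^{(j)})-y$ appears as a factor, so every contribution is automatically proportional to $\sqrt{\Phi(u^{(j)})}$; the factor $J$ in the bound then comes from $\|e^{(j)}\|^2\le 2JV_e(t)$, not from $\sqrt J\le J$. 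Your mean-based decomposition is natural for the equation actually satisfied by $\bar u_t$, but it is precisely this choice that produces the cross terms (A) and (C) without a misfit factor, and that is where your argument diverges from the cited route and leaves a gap.
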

%\begin{proof}
%    The proof follows directly from \protect{\cite[Lemma~6.4]{Weissmann_2022}}, we only need to take the constant $\tau$ into account which is due to the linearity of the norm no problem.
%\end{proof}
We are now ready to formulate our main result of convergence.
\begin{theorem}[main convergence]\label{thm:main_conv}
    Suppose that Assumption~\ref{assu:convex_lip}, \ref{ass:linear_apprx}, \ref{ass:inequality_constr} and \ref{ass:feasibility} are satisfied. Let $u_\tau^*$ be the unique global minimiser of \eqref{eqn:log_barrier_sys}, let $(u_0^{(j)})_{j=1,\dots,J}\in \Omega$ be %a linearly independent 
    the initial particle system with $J>d$ and let $(u_t^{(j)}, t\ge0)$, $j=1,\dots,J$, be a solution of \eqref{eqn:dyn_varinfl}. %and let $\bar{u}_t=\frac{1}{J}\sum_{j=1}^J u_t^{(j)}$.
    Then there exist $c_1,c_2>0$ and $\bar t\ge0$ such that
    \[\Phi^b(\bar{u}_t)-\Phi^b(u_*^\tau)\le\left(\frac{c_1}{t+c_2}\right)^{\frac{1}{\gamma}} \, ,\]
    for all $t\ge \bar t$, where $\gamma:=c+w\max(k_1,0)>0$ is independent of $t$ and $\tau$. The constants $c$, $w$ and $k_1$ are defined in Lemma~\ref{lemma:ens_col_lower}.
    In particular, we have that 
    \[ \lim_{t\to\infty} \Phi^b(\bar{u}_t)-\Phi^b(u_*^\tau) = 0\,.\]
\end{theorem}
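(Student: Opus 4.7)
The guiding idea is to interpret \eqref{eqn:dyn_varinfl} as a preconditioned gradient flow of $\Phi^b$ with a vanishing perturbation, and then combine the Polyak--{\L}ojasiewicz (PL) inequality with the two-sided ensemble collapse bounds of Section~\ref{sec:feasibity_and_ek}. Set $D(t) := \Phi^b(\bar u_t) - \Phi^b(u_*^\tau)\ge 0$; the target is a scalar Gr\"onwall-type differential inequality for $D$.

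My first step would be to decompose
\begin{equation*}
\frac{\mathrm d \bar u_t}{\mathrm d t} = -\widehat C(u_t)\,\nabla \Phi^b(\bar u_t) + \mathrm{Err}(t),
\end{equation*}
observing that the barrier drift in \eqref{eqn:dyn_varinfl} is exactly $-\widehat C(u_t)$ applied to the barrier part of $\nabla\Phi^b(\bar u_t)$, while $v(u_t)$ from \eqref{eq:drift_force} coincides with $-\widehat C(u_t)\nabla\Phi^{\scaleto{\mathrm{reg}}{5pt}}(\bar u_t)$ up to the error bounded in Lemma~\ref{lemma:approx error} (extended in the obvious way to cover the linear Tikhonov term, which enters the dynamics exactly). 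Proposition~\ref{lemma:ens_in_box} keeps $\bar u_t$ in a bounded subset of $\Omega$, so $\sqrt{\Phi(\bar u_t)}$ is uniformly bounded in $t$ and $\tau$, giving $\|\mathrm{Err}(t)\|\le c_E V_e(t)^{3/2}$.

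Next, differentiating $D(t)$, using $\widehat C(u_t)\succeq \eta_t I$ on $\mathcal S$, invoking the PL inequality for $\Phi^b$ (Lemma~\ref{lemma:pl-phib}) to replace $\|\nabla \Phi^b(\bar u_t)\|^2$ by a multiple of $D(t)$, and then substituting the upper bound $V_e(t)=\mathcal O(1/t)$ from Lemma~\ref{lemma:bound_enskol} together with the lower bound $\eta_t\ge w/[\gamma(t+b)]$ from Lemma~\ref{lemma:ens_col_lower} yields, after a Young-type absorption of the cross term $\|\nabla\Phi^b(\bar u_t)\|\,\|\mathrm{Err}(t)\|$, an inequality of the form
\begin{equation*}
\dot D(t) \le -\frac{\kappa_1}{t+b}\,D(t) + \frac{\kappa_2}{(t+b)^{r}},\qquad t\ge \bar t,
\end{equation*}
with $r>1$ and constants $\kappa_1,\kappa_2,b,\bar t$ independent of $\tau$. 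Multiplying by the integrating factor $(t+b)^{\kappa_1}$ and integrating then gives the claimed polynomial decay $D(t)\le (c_1/(t+c_2))^{1/\gamma}$, the exponent being read off from $\kappa_1$ and the specific constants in Lemma~\ref{lemma:ens_col_lower}; the limit $D(t)\to 0$ follows.

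The main obstacle I anticipate is the first step: one has to track that $\|\nabla\Phi^b(\bar u_t)\|$ remains finite along the trajectory despite the singular barrier, and that the perturbation estimate for $\mathrm{Err}(t)$ is uniform in $\tau$. The former requires arguing that $\bar u_t$ stays at positive distance from $\partial\Omega$, which follows from the dissipation already encoded in the barrier drift combined with Proposition~\ref{lemma:ens_in_box}; the latter uses that the collapse rates in Section~\ref{sec:feasibity_and_ek} are themselves $\tau$-independent. Once these uniform bounds are secured, the remainder is a routine ODE comparison.
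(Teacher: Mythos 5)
Your proposal follows essentially the same route as the paper's proof: the same Lyapunov function $\Phi^b(\bar u_t)-\Phi^b(u_*^\tau)$, the same decomposition of $\frac{\mathrm d\bar u_t}{\mathrm d t}$ into $-\widehat C(u_t)\nabla\Phi^b(\bar u_t)$ plus an error controlled by Lemma~\ref{lemma:approx error} (with the Tikhonov and barrier parts entering exactly), the PL inequality of Lemma~\ref{lemma:pl-phib}, the two-sided collapse bounds of Lemmas~\ref{lemma:bound_enskol} and~\ref{lemma:ens_col_lower}, a Young-type absorption of the cross term for $t\ge\bar t$, and a Gr\"onwall/integrating-factor argument yielding the algebraic decay. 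The only cosmetic difference is that the paper does not need your anticipated uniform bound on $\|\nabla\Phi^b(\bar u_t)\|$, since after absorption the gradient-squared term carries a non-positive coefficient and is simply dropped.
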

\begin{proof}
   Define $V(t)=\Phi^b(\bar{u}_t)-\Phi^b(u_*^\tau)$ and $Err(t)= \widehat{C}^{u,G}_t \Gamma^{-1} (\bar{G}_t-y) -\widehat{C}(u_t)\nabla\Phi(\bar{u})$, then we have
   \begin{align*}
       \frac{\mathrm{d} V(t)}{\mathrm{d} t}=&\langle \nabla \Phi^b(\bar{u}_t), \frac{\mathrm{d}\bar{u}_t}{\mathrm{d} t}\rangle\\
       =&\langle\nabla \Phi^b(\bar{u}_t), \left[-\widehat{C}^{u,G}_t \Gamma^{-1} (\bar{G}(u_t)-y)-\widehat{C}({u}_t) C_0^{-1}\bar{u}_t\right]+\frac1{\tau}\widehat{C}(u_t)\sum_{i=1}^{m}\frac{1}{h_i(\bar{u}_t)}\nabla h_i(\bar{u}_t)\rangle\\
       %=&\langle\nabla \Phi^b(\bar{u}_t),-Err(t)\rangle\\
       %&+\langle\nabla \Phi^b(\bar{u}_t),\tau \widehat{C}(u_t)(-\nabla\Phi(\bar{u}_t)-C_0^{-1}\bar{u}_t+\sum_{i=1}^{m}\frac{1}{h_i(\bar{u}_t)}\nabla h_i(\bar{u}_t))\rangle\\
       % =&\langle\nabla \Phi^b(\bar{u}_t),-Err(t)\rangle\\
       %&+\langle\nabla \Phi^b(\bar{u}_t),-\tau \widehat{C}(u_t)\nabla\Phi^b(\bar{u}_t)\rangle\\
       =&\langle\nabla \Phi^b(\bar{u}_t),-Err(t)\rangle\\
       &+\langle\nabla \Phi^b(\bar{u}_t), \widehat{C}(u_t)(-\nabla\Phi(\bar{u}_t)- C_0^{-1}\bar{u}_t+\frac{1}{\tau}\sum_{i=1}^{m}\frac{1}{h_i(\bar{u}_t)}\nabla h_i(\bar{u}_t))\rangle\\
        =&\langle\nabla \Phi^b(\bar{u}_t),-Err(t)\rangle+\langle\nabla \Phi^b(\bar{u}_t),-\widehat{C}(u_t)\nabla\Phi^b(\bar{u}_t)\rangle\, .
   \end{align*}
   The first term can be approximated by Cauchy-Schwarz and using \eqref{eqn:mean_grad_approx}
   \begin{align*}
       \langle\nabla \Phi^b(\bar{u}_t),-Err(t)\rangle\leq \|\nabla \Phi^b(\bar{u}_t)\|\|Err(t)\|&\leq \|\nabla \Phi^b(\bar{u}_t)\|b_1 J\sqrt{\Phi(\bar{u}_t)} V_e(t)^{\frac{3}{2}}\\
       &\leq \|\nabla \Phi^b(\bar{u}_t)\|b_1 J\sqrt{c_{lip}\|\bar{u}_t\|} V_e(t)^{\frac{3}{2}}\,.
   \end{align*}
   For the second term we obtain through Lemma~\ref{lemma:bound_enskol} and the PL-inequality, for which we give more details in Lemma~\ref{lemma:pl-phib},
   \begin{align*}
       -\langle\nabla \Phi^b(\bar{u}_t), \widehat{C}(u_t)\nabla\Phi^b(\bar{u}_t)\rangle \leq -\eta_t  \|\nabla \Phi^b(\bar{u}_t)\|^2\leq-2\mu\eta_t\left(\Phi^b(\bar{u}_t)-\Phi^b(u_*^\tau)\right),
   \end{align*}
   where $\eta_t>0$ denotes the smallest eigenvalue of $\widehat{C}(u_t)$.
   %Therefore, we have
%\begin{align*}
%    \frac{\mathrm{d} V(u)}{\mathrm{d} t}&\leq
%    -\eta_t \tau \left(\Phi^b(\bar{u}_t)-\Phi^b(u_*^\tau)\right)+\|\nabla \Phi^b(\bar{u}_t)\|b_1\tau J\sqrt{c_{lip}\|\bar{u}_t\|} V_e(t)^{\frac{3}{2}}\\
%    &\leq \frac{m}{(c+m\max(k_1,0)) (t+b)} \tau V(u)+L\|\bar{u}_t\|b_1\tau J \sqrt{c_{lip}\|\bar{u}_t\|} \left(\frac{1}{\frac{2\tau\sigma_{min}}{J}t+V_e(0)^{-1}}\right)^{\frac{3}{2}}\\
%    &\leq \frac{m}{(c+m\max(k_1,0)) (t+b)} \tau V(u)+L B^{\frac{3}{2}}\sqrt{c_{lip}}\left(\frac{1}{\frac{2\tau\sigma_{min}}{J}t+V_e(0)^{-1}}\right)^{\frac{3}{2}},
%\end{align*}
%where $a=\frac{(1-\rho_t)c_{lip}^2 \lambda_{max}J}{\sigma_{min}}, b=\frac{V_e(0)^{-1}J}{2\tau\sigma_{min}}, c=2\tau\sigma_{max}, m=1-a$.
Using Young's inequality it follows that $\|\nabla \Phi^b(\bar{u}_t)\| \le \frac12\|\nabla \Phi^b(\bar{u}_t)\|^2+\frac12$, such that we obtain
\begin{align*}
    \frac{\mathrm{d} V(t)}{\mathrm{d} t}&\leq
    -\eta_t \|\nabla \Phi^b(\bar{u}_t)\|^2 +\|\nabla \Phi^b(\bar{u}_t)\|b_1 J\sqrt{c_{lip}\|\bar{u}_t\|} V_e(t)^{\frac{3}{2}}\\
    &\leq -\frac{w}{(c+w\max(k_1,0)) (t+b)} \|\nabla \Phi^b(\bar{u}_t)\|^2\\ &\quad +\left(\frac12\|\nabla \Phi^b(\bar{u}_t)\|^2+\frac12\right)b_1 J \sqrt{c_{lip}B} \left(\frac{1}{\frac{2\sigma_{min}}{J}t+V_e(0)^{-1}}\right)^{\frac{3}{2}}
\end{align*}
for all $t\ge t_\circ$, where we have used Lemma~\ref{lemma:ens_col_lower}. For sufficiently large $\bar t\ge0$ we have that
\[\frac12\|\nabla \Phi^b(\bar{u}_t)\|^2 \left(b_1 J \sqrt{c_{lip}B} \left(\frac{1}{\frac{2\sigma_{min}}{J}t+V_e(0)^{-1}}\right)^{\frac{3}{2}} -\frac{w}{2(c+w\max(k_1,0)) (t+b)}\right) \le 0\,,  \]
for all $t\ge \bar t$, such that 
\begin{align*}
    \frac{\mathrm{d} V(t)}{\mathrm{d} t}
    &\leq -\frac{w}{2(c+w\max(k_1,0)) (t+b)} \|\nabla \Phi^b(\bar{u}_t)\|^2 +\frac12b_1 J \sqrt{c_{lip}B} \left(\frac{1}{\frac{2\sigma_{min}}{J}t+V_e(0)^{-1}}\right)^{\frac{3}{2}}\\
    &\leq -\frac{w\mu}{(c+w\max(k_1,0)) (t+b)}  V(t) +\frac12b_1 J \sqrt{c_{lip}B} \left(\frac{1}{\frac{2\sigma_{min}}{J}t+V_e(0)^{-1}}\right)^{\frac{3}{2}}
\end{align*}
for all $t\ge\max\{\bar t,t_\circ\}$. Using Gronwall's Lemma we obtain that
\[V(t) = \Phi^b(\bar u_t) - \Phi^b(u_\ast^\tau) \le \left(\frac{c_1}{t+c_2}\right)^{\frac{1}{\gamma}}\,, \]
for all $t\ge\max\{\bar t,t_\circ\}$, where $\gamma:=c+w\max(k_1,0)$ is independent of $t$ and $\tau$.
\end{proof}

\begin{remark} \label{rem:subspace_prop}
    The assumption $J>d$ in Theorem~\ref{thm:main_conv} is rather constraining, and usually not fulfilled in practical situations. However, in the case $J\leq d$ we have observed that the well-known subspace property Lemma~\ref{lemma:subspaceproperty} is satisfied. %Hence, we can add equality constraints in the form of $\langle v,u^{(j)}_t \rangle$ for all $v\in \mathcal{S}, j\in\{1,\cdot,J\}$ and $t>0$, where $\mathcal{S}:= u_0^\perp + \spann\{u_0^{(j)}-\bar{u}_0, j=1,...,J\}$. These ensure, that the subspace property is satisfied \cite{Weissmann_2022}.
    In that case we can consider convergence to a KKT-point $u^\ast$ of \eqref{eqn:cons_min_prob_subspace} and the results in Theorem~\ref{thm:main_conv} can be generalized straightforwardly to this setting. We refer to Section~4.3 in \cite{Weissmann_2022} for more details.
\end{remark}

\section{Numerical Experiments} \label{sec_NumExp}
We now conduct two numerical experiments to test our methodology. In the first experiment, we aim to estimate the source term in a 1D parabolic PDE, using measurements obtained from its solution. However, this experiment is a linear inverse problem, therefore we incorporate a non-linear perturbation term into the potential \eqref{eqn:regul_pot}. Subsequently, in the second experiment, our goal is to estimate the log-diffusion coefficient in a 2D elliptic PDE once again using measurements of its solution. 

\subsection{Implementation of the penalty approach} \label{sec:implem_adap_tau}
We conclude the latter experiment with an implementation of an adaptive penalty parameter $\tau$. Let $u_\ast$ be the KKT-point of \eqref{eqn:opti_box} and $u_\ast^\tau\in \R^d$ the unique global minimiser of $\Phi^b$. Then the error \eqref{eqn:error_true_kkt} decreases as the penalty parameter $\tau$ increases.\\
We will compute the solutions of \eqref{eqn:dyn_varinfl} for several fixed penalty parameters $\tau$ and compute the errors to $u_\ast$ in the parameter space as well as the observations space. In Proposition~\ref{lemma:ens_in_box} we have verified that each solution of \eqref{eqn:dyn_varinfl} remains feasible for fixed $\tau$. However, in numerical implementations this cannot be guaranteed anymore. Depending on the chosen numerical solver one may need to validate feasibility in each discretisation step and successively decrease the applied step size.
Indeed, we observe that the ODE \eqref{eqn:dyn_varinfl} becomes more and more stiff with increasing $\tau$. Hence, our applied ODE solver has to take smaller step sizes in order to ensure feasibility and the algorithm becomes computationally slow. Taking this into account, we also consider an adaptive choice of the penalty parameter $\tau$, which is increasing over time, when solving the ODE \eqref{eqn:dyn_varinfl} in Section~\ref{sec:adap_tau}. To be more precise, we consider the dynamical system
\begin{align}\label{eqn:dyn_varinfl_adaptau}
    \frac{\mathrm{d} u^{(j)}_t}{\mathrm{d}t}= p_{\rho_t,0}(u_t^{(j)})
    +\frac{1}{\tau_t}\widehat{C}(u_t)\sum_{i=1}^{m}\frac{1}{h_i(\bar{u}_t)}\nabla h_i(\bar{u}_t),
\end{align}
with $(\tau_t)_{t\ge0}$ denoting our adaptive penalty parameter choice with $\lim_{t\to\infty}\tau_t=\infty$. We will observe in Section~\ref{sec:adap_tau} that this approach leads to significant reduction of the overall run-time.

\subsection{Pseudolinear example} \label{ex:psylinea}
We consider an inverse problem of the form \eqref{eqn:regul_pot} where $G(u)=Au+\varepsilon\left[\sin(u_1),...,\sin(u_K)\right]^T$, where $A\in\mathbb{R}^{K\times K}$ is symmetric and positive semi-definite and $\varepsilon>0$. We assume that $\sup_{u\in\Omega} \|u\|\le B$ and let $A_{\max} = \max_{i,j}|A_{i,j}|$, $y_{\max} = \max_{i}|y_i|$. For $\lambda > ((4+B)a_{\max} + y_{\max}) \varepsilon + \varepsilon^2 $, the Tikhonov regularised loss function 
    \[\Phi^{\scaleto{\mathrm{reg}}{5pt}}(u) = \frac12\|G(u)-y\|^2 + \frac{\lambda}2 \|u\|^2\]
    is strongly convex and $L$-smooth, see Lemma~\ref{lemma:ex_convex}.\\ For the linear model $A$ we consider a one-dimensional heat equation given by the differential equation
\begin{alignat*}{2}
            \frac{\partial u(x,t)}{\partial t} - \frac{\partial^2 u(t,x)}{\partial x^2}  &= f(x) \qquad  &&(t > 0, x \in (0,1))\\
            u(0,x) &= 0   \qquad     &&(x \in (0,1))\\
   u(t,0),u(t,1) &= 0     \qquad     &&(t\geq 0).
\end{alignat*}
We aim to recover the forcing $f$ given observations of the solution at specific grid points. For this we introduce the solution operator of the PDE $L=\frac{\mathrm{d}}{\mathrm{d} t}-\frac{\mathrm{d}^2}{\mathrm{d}^2 x}$ and an equidistant observation operator on $[0,1]\times\mathbb{R}_{\geq 0}$ defined as $\mathcal{O}:H_0^1([0,1],\mathbb{R})\rightarrow \mathbb{R}^K, (p(\cdot)) \mapsto \mathcal{O}(p(\cdot))=\left(p(x_1),...,p(x_K)\right)^T$. Here $p\in H_0^1([0,1],\mathbb{R})$ denotes a solution operator of the PDE. Then the forward operator is given by 
$A=\mathcal{O} \circ L^{-1}$, and the inverse problem is respectively
        $$u=Af+\eta,$$
where $\eta\sim\mathcal{N}(0,\Gamma)$, with $\Gamma=0.1^2 \mathrm{Id}_K$.\\
The forcing is assumed to follow a Gaussian distribution with zero mean and covariance given by $C_0:=C(s,t)=\sigma^2\exp{(-\frac{|s-t|^2}{L_{sc}})}$, where $(s,t)\in[0,1]\times[0,1]$. In this expression, $\sigma^2$ is set to $10$ and $L_{sc}$ is set to $0.1$. To simulate the random field, we employ a Karhunen-Loève (KL) expansion truncated after $12$ terms. Specifically, we express $f(x,\omega)$ as follows:

\[f(x,\omega)=\sum_{i=1}^8 \lambda_i^{1/2}e_i(x)\xi_i(\omega)\]
Here, $\lambda_i$ represents the largest eigenvalues of $C(s,t)$, $e_i(x)$ denotes the corresponding eigenfunctions, and $\xi_i$ are standard normal distributed random variables. Furthermore we draw our initial ensemble from the same random field by making $J=10$ independent draws from 
$f(x,\omega)$. Note that the generated solution of the EKI will lie in $f_0^\perp +\mathcal E$, where $\mathcal E$ is the linear span of the centered initial ensemble and $f_0^\perp=\bar f(0)-P_{\mathcal E} \bar f(0)$.
For the simulation, we use a spatial step size of $h = 0.01$, hence we have $99$ interior points and therefore the state space dimension is $d=99$, a time step size of $\Delta = 0.05$, and only solved for one step size into the future.\\
We draw the initial forcing $f^\dagger$ from $f(x,\omega)$ and compute our observations by solving the PDE given this forcing.\\ We impose convex constraints (CC) in the form of an upper bound on the norm of our solution, i.e. we set $h(u)=\frac{1}{2}\|u\|^2_{C_0}-\frac{1}{2}\|f^\dagger\|_{C_0}$ and require $h(u^{(j)}_t)\le 0$ for all $j=1,...,J$ and $t\geq 0$. \\

We compare our solution to the global optimum of \eqref{eqn:log_barrier_sys}, where we use, MATLABs optimisation tool \verb+fmincon+. We solve the ODE \eqref{eqn:dyn_varinfl} until $T=10^6$ and use \verb+ode45+ as solver. We use $\tau=10^4$ and a regularisation factor of $\beta=0.01$.\\
Furthermore, we consider two different types of covariance inflation
\begin{itemize}
    \item Increasing covariance inflation $\rho_t=1-\frac{1}{\log(t+\exp(1)}$.
    \item Constant covariance inflation $\rho_t=0.8$ for all $t\geq0$.
\end{itemize}

\begin{figure}[t]
\centering
\captionsetup{width=.9\linewidth}
\includegraphics[scale=0.45]{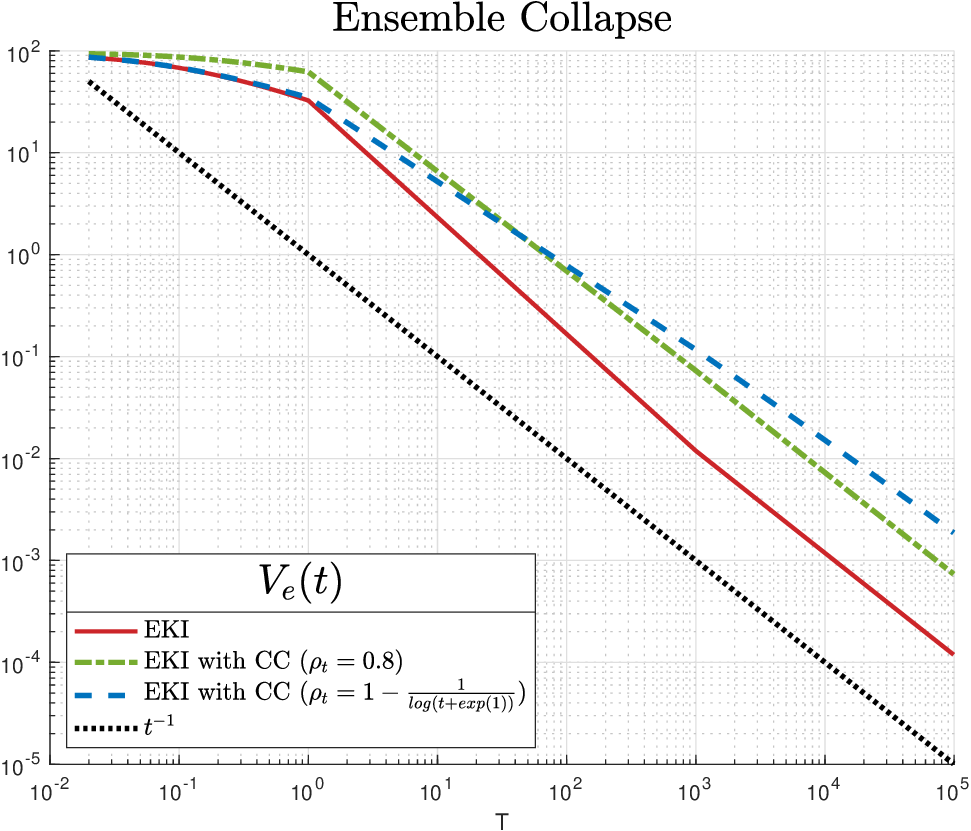}
\caption{Ensemble collapse in the form of $V_e(t)$ of the particles for the EKI (red) and EKI with BC (green line represents fixed covariance inflation; blue line increasing covariance inflation). The black dotted line depicts the rate $t^{-1}$}
\label{fig:ek_psylinear}
\end{figure}

Figure~\ref{fig:ek_psylinear} depicts the ensmble collapse of the particles in the form $V_e(t)=\frac{1}{J}\sum_{j=1}^J\frac{1}{2}\|e_t^{(j)}\|^2$. We can see that the ensemble collapse for both methods with convex constraints happen with the same rate $t^{-1}$ as as imposed by Lemma~\ref{lemma:bound_enskol}. Both methods perform similarly well and only differ from the EKI due to constants.

\begin{figure}[t]
\centering
\captionsetup{width=.9\linewidth}
\includegraphics[scale=0.4]{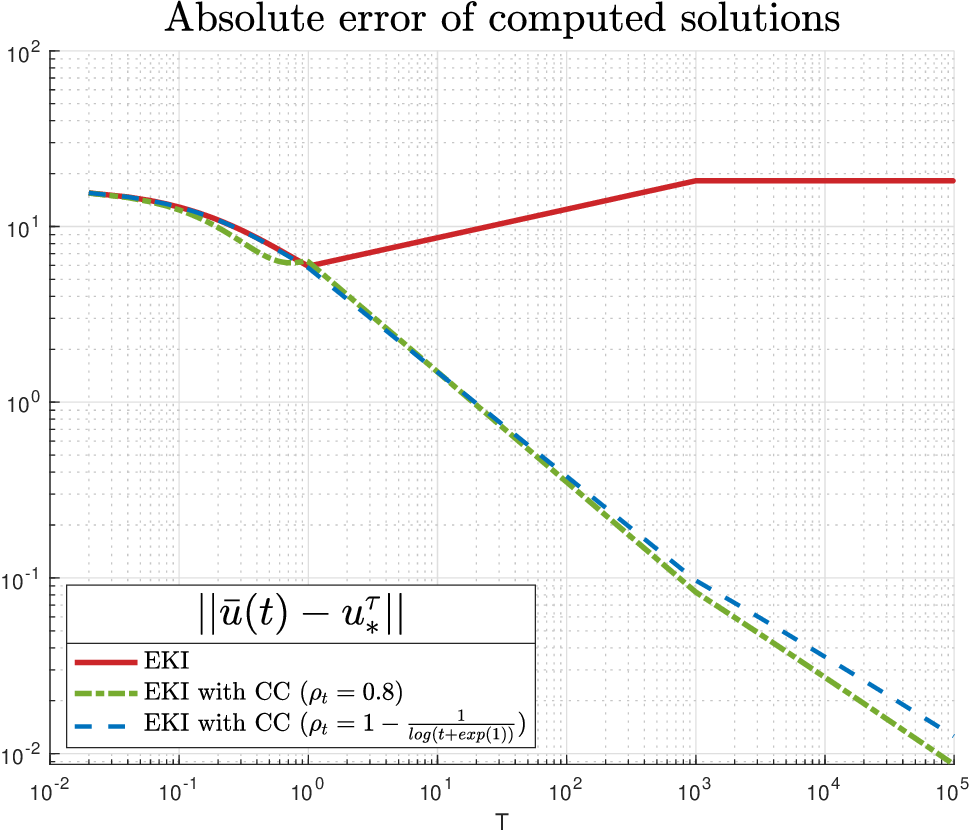}
%\hspace*{0.8cm}%
\includegraphics[scale=0.4]{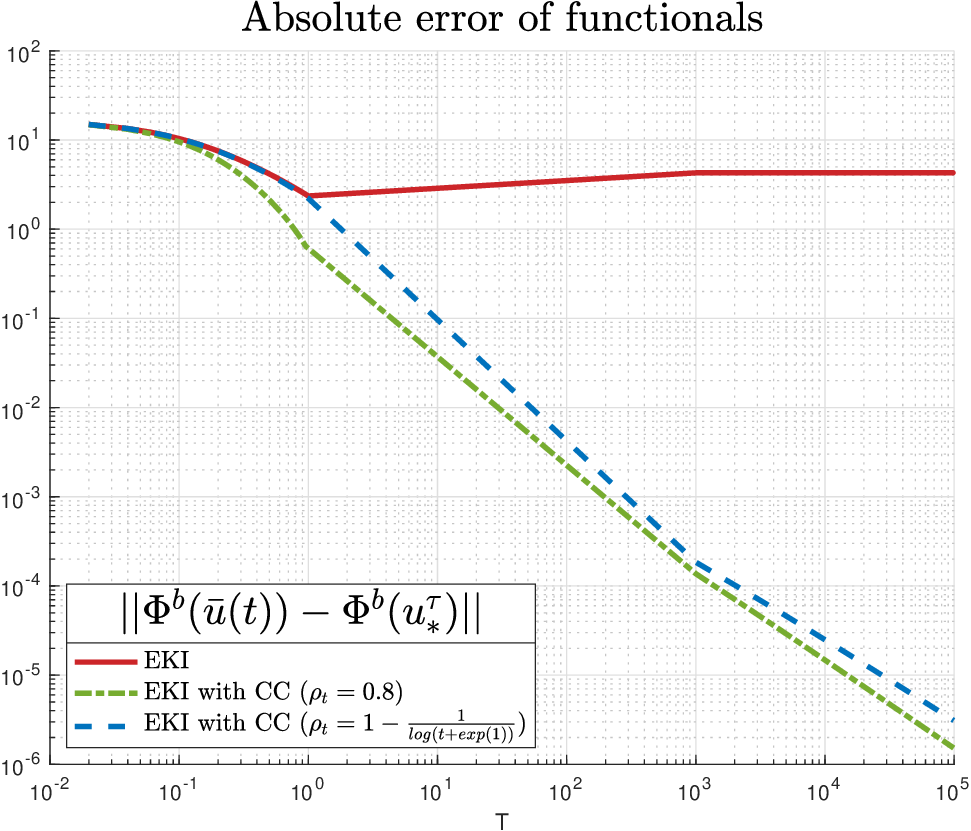}
\caption{Absolute error of computed solutions in the parameter (left) and observation space (right). The red line illustrates the EKI, the green dash-dotted line EKI with CC and constant covariance inflation of $\rho_t=0.8$ and the blue dashed line considered an increasing covariance inflation of $\rho_t=1-\frac{1}{\log(t+\exp(1)}$.}
\label{fig:err_param_obs_2d_vi_psylinear}
\end{figure}

In Figure~\ref{fig:err_param_obs_2d_vi_psylinear} we illustrate the error in the parameter space and observation space. The solutions computed by both methods with convex constraints converge to the KKT point $u_\ast^\tau$, whereas the EKI without constraints does not converge, since it does not take the constraints into account. Furthermore, we note that both methods with covariance inflation perform similarly well. In regard to the performance of the algorithms this is an important observation since the ODE \eqref{eqn:dyn_varinfl} get stiffer with higher value of covariance inflation and therefore also computationally slower. Since the experiment with $\rho_t=0.8$ is performing similarly well as with $\rho_t=1-\frac{1}{\log(t+\exp(1)}$ one should preferably implement a fixed value of covariance inflation.

\begin{figure}[t]
\centering
\captionsetup{width=.9\linewidth}
\includegraphics[scale=0.4]{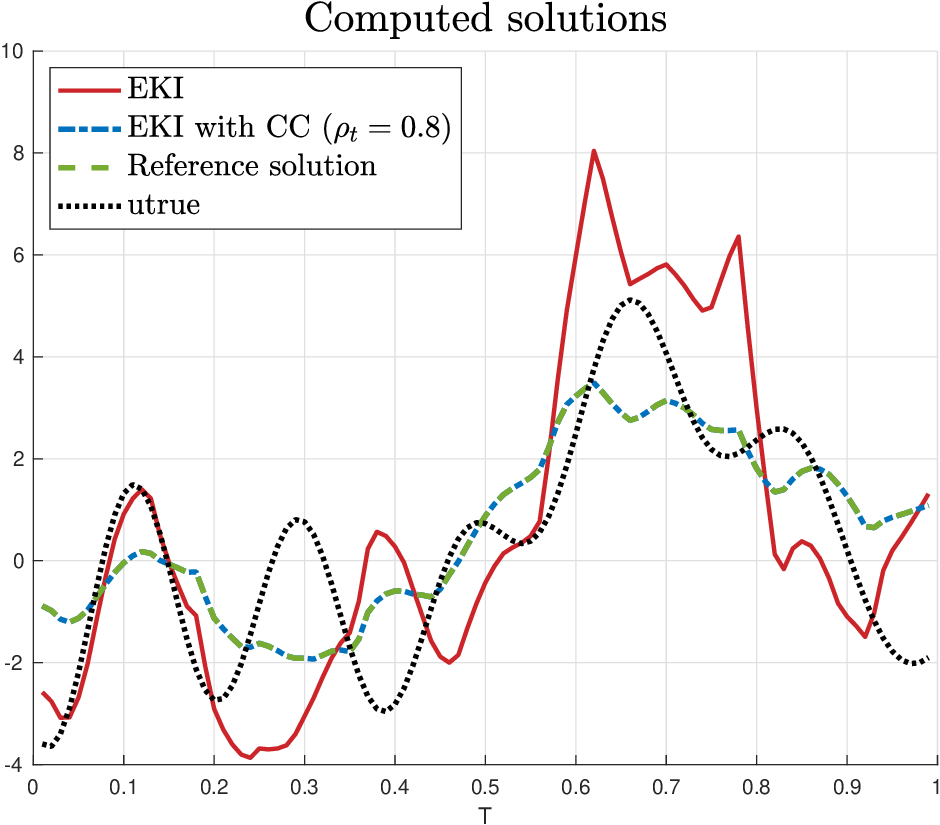}
%\hspace*{0.8cm}%
\includegraphics[scale=0.4]{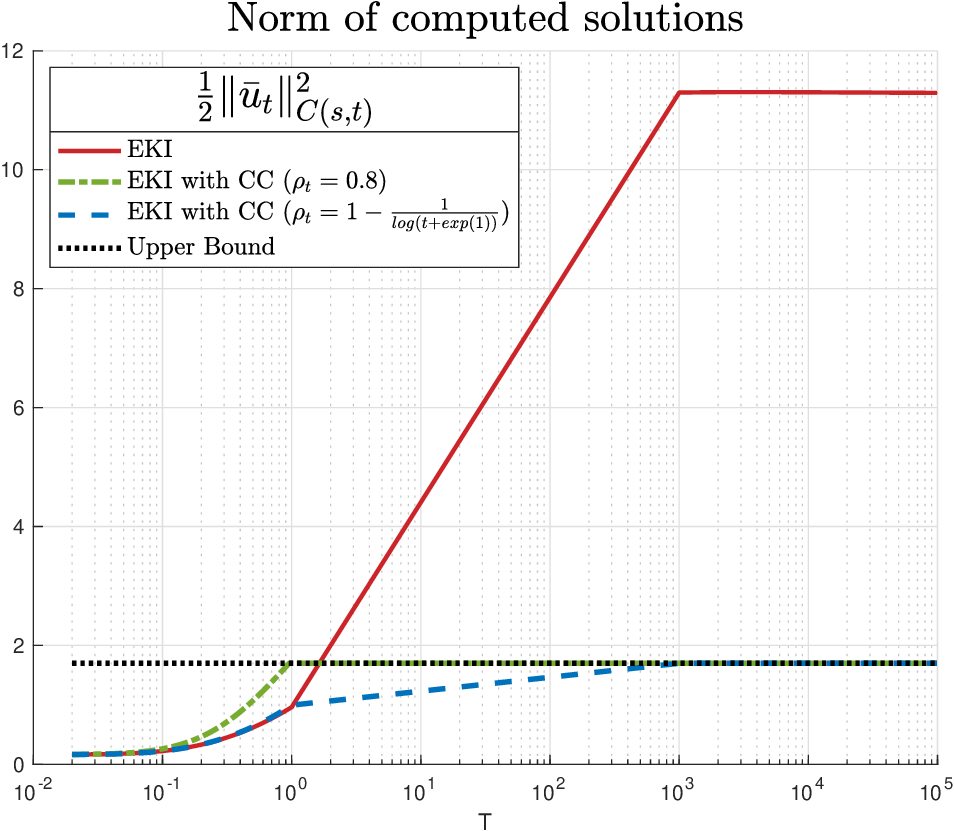}
\caption{Estimated computed solutions (left) and weighted norm of the of the computed solutions (right).} 
\label{fig:est_sol_1d_cons_vi}
\end{figure}
In the left image of Figure~\ref{fig:est_sol_1d_cons_vi} we illustrate the computed solutions. We only illustrate the solution computed with $\rho=0.8$, the dashed-green line, since it is very similar to the one for increasing covariance inflation. We can see that that it is visually identical to the KKT-point $u_\ast^\tau$, whereas the EKI solution, is has more discrepancies. This can also be seen in the right image, where we illustrate the weighted norm of the solutions. The dotted black line, illustrates the upper bound, i.e., $\frac{1}{2}\|f^\dagger\|_C(s,t)$. We can see that the norms of both solutions with CC (blue and green line) increase over time but after reaching the upper bound they stay below this bound, whereas the EKI does not satisfy the bound, which explains why the error in Figure~\ref{fig:err_param_obs_2d_vi_psylinear} does not decrease.

\subsection{2D-Darcy flow} \label{subsec:2d-darcy}

We consider the following elliptic Partial Differential Equation

\begin{equation} \label{pde:2ddarcy}
    \left\{
    \begin{aligned}
                -\nabla \cdot \left(\exp(u)\nabla p\right)  &= f,  \qquad  x\in D\\
            p&=0,   \qquad     x\in\partial D
    \end{aligned}
    \right.,
\end{equation}
where $D=\left(0,1\right)^2$ denotes the domain. Given observations of the solution $p\in H_0^1(D)\cap H^2(D):=\mathcal{V}$, our objective is to estimate the unknown diffusion coefficient $u^\dagger\in C^1(D)=X$. Additionally, we assume that the scalar field $f\in\mathbb{R}$ is known and define an observation operator for $K$ randomly chosen points in $X$ , i.e. $\mathcal{O}(p)=\left(p(x_1),...,p(x_K)\right)$.\\
The observations are represented by the equation
\[y=\mathcal{O}(p)+\eta\, .\]
The noise on our data, denoted by $\eta$ is assumed to be Gaussian, representing a realization of $\mathcal{N}(0,\Gamma)$, where $\Gamma=0.1^2 \mathrm{Id}_K$.\\
Thus, our inverse problem is given by
\[y=\mathcal{G}(u)+\eta\, ,\]
where $\mathcal{G}=\mathcal{O}\circ G$ and $G:X\rightarrow\mathbb{R}^K$ denotes the solution operator of the PDE \eqref{pde:2ddarcy}. We solve the PDE on a uniform mesh of the size $32\times 32$ using a FEM method with continuous, piece wise linear finite element basis functions. Consequently, the parameter space is of size $d=1024$.
As prior distribution we consider the random field
\[u(x,\omega)=\sum_{i=1}^s\lambda_i^{1/2}e_i(x)\xi_i(\omega)\, ,\]
where $\lambda_i=\left(\pi^2(k_j^2+l_j^2)+\tau^2\right)^{-\alpha}$ and $e_i(x)=\cos(\pi x_1 k_j)\cos(\pi x_2 l_j)$ with $\tau=0.01,\alpha=2,s=25,(k_j,l_j)_{j\in\{1,...,s\}}\in\{1,...,s\}^2$. The variables $\xi_i$ are i.i.d standard normal variables.\\
We select our underlying groundtruth by drawing $u^\dagger$ from the prior distribution. Furthermore, we consider Box-constraints (BC) in the form of upper and lower bounds on the solution, i.e. the solutions have to satisfy $(u_t^{(j)})_i\in\left[a,b\right]$, where $a=\min{u^\dagger}+0.3|\min{u^\dagger}|$ and $b=\max{u^\dagger}-0.3|\max{u^\dagger}|$ for all $j=1,...,J, t\geq0$ and components $i=1,...,d$.

Again we use MATLABs optimisation tool \verb+fmincon+ to compute the KKT point of \eqref{eqn:log_barrier_sys} and solve the ODE \eqref{eqn:dyn_cov_scalar} with \verb+ode45+. We compare again constant covariance inflation $\rho_t=0.8$ for all $t\geq0$ and increasing covariance inflation $\rho_t=1-\frac{1}{\log(t+\exp(1)}$. The particle size is set to $J=10$ and therefore, the approach seeks a solution in the subspace $u_0^\perp + \spann\{u_0^{(j)}-\bar{u}_0, j=1,...,J\}$.

We set a fixed penalty parameter $\tau=10^4$, a regularisation factor of $\beta=0.01$ and compute the solutions up until $T=10^6$.\\

\begin{figure}[t]
\centering
\captionsetup{width=.9\linewidth}
\includegraphics[scale=0.4]{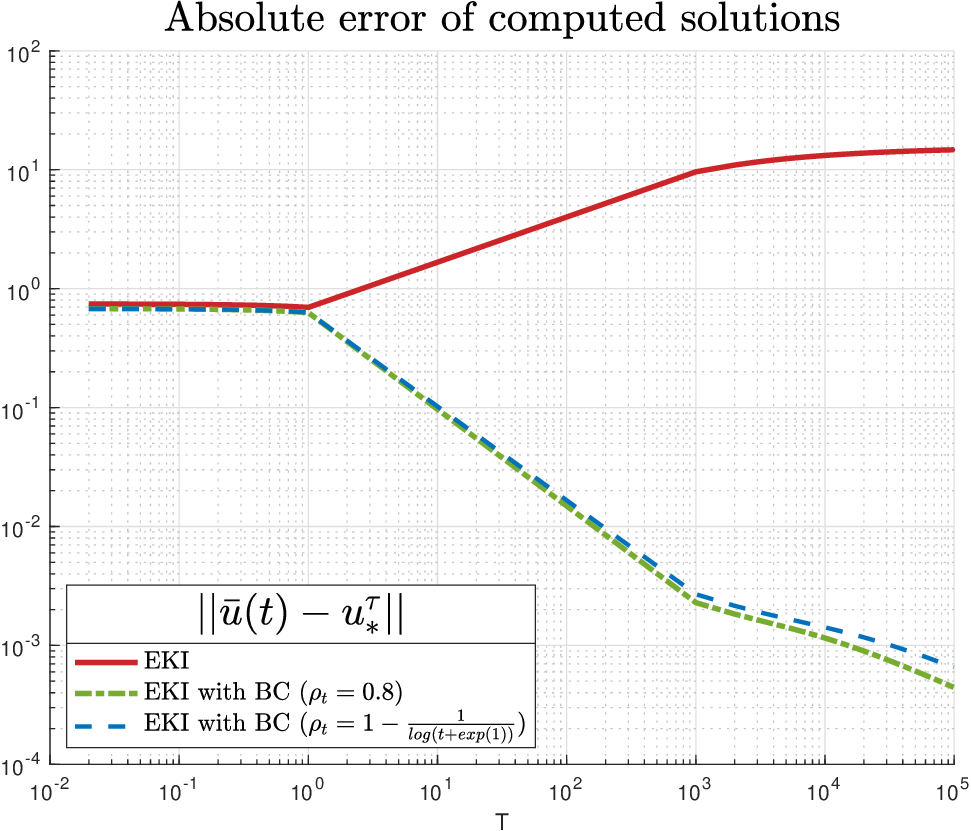}
%\hspace*{0.8cm}%
\includegraphics[scale=0.4]{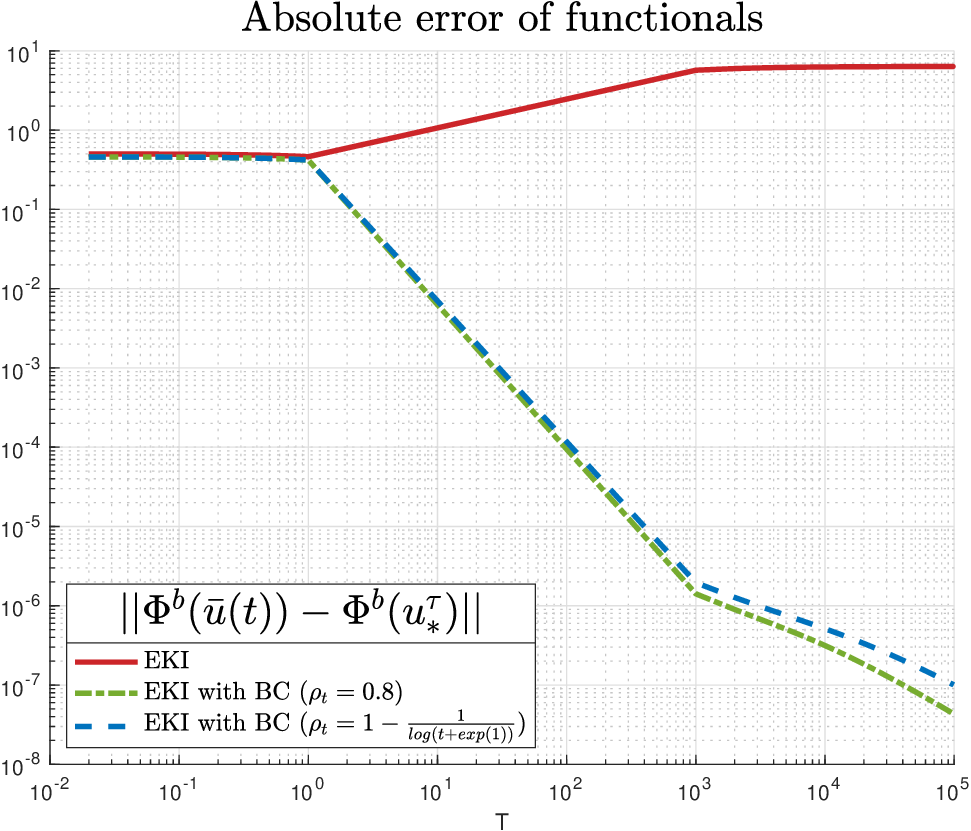}
\caption{Absolute error of computed solutions in the parameter (left) and observation space (right). The red line illustrates the EKI, the green line EKI with BC and constant covariance inflation of $\rho_t=0.8$ and the blue line uses $\rho_t=1-\frac{1}{\log(t+\exp(1)}$.}
\label{fig:err_param_obs_2d_nvi}
\end{figure}

In Figure~\ref{fig:err_param_obs_2d_nvi} we can see similarly to the experiment in Section~\ref{ex:psylinea} that the errors to the KKT point $u_\ast^\tau$ of \eqref{eqn:log_barrier_sys} of the methods with BC perform similarly well and both converge to $0$, whereas the original EKI does not converge. 

\begin{figure}[t]
\centering
\captionsetup{width=.9\linewidth}
\includegraphics[scale=0.7]{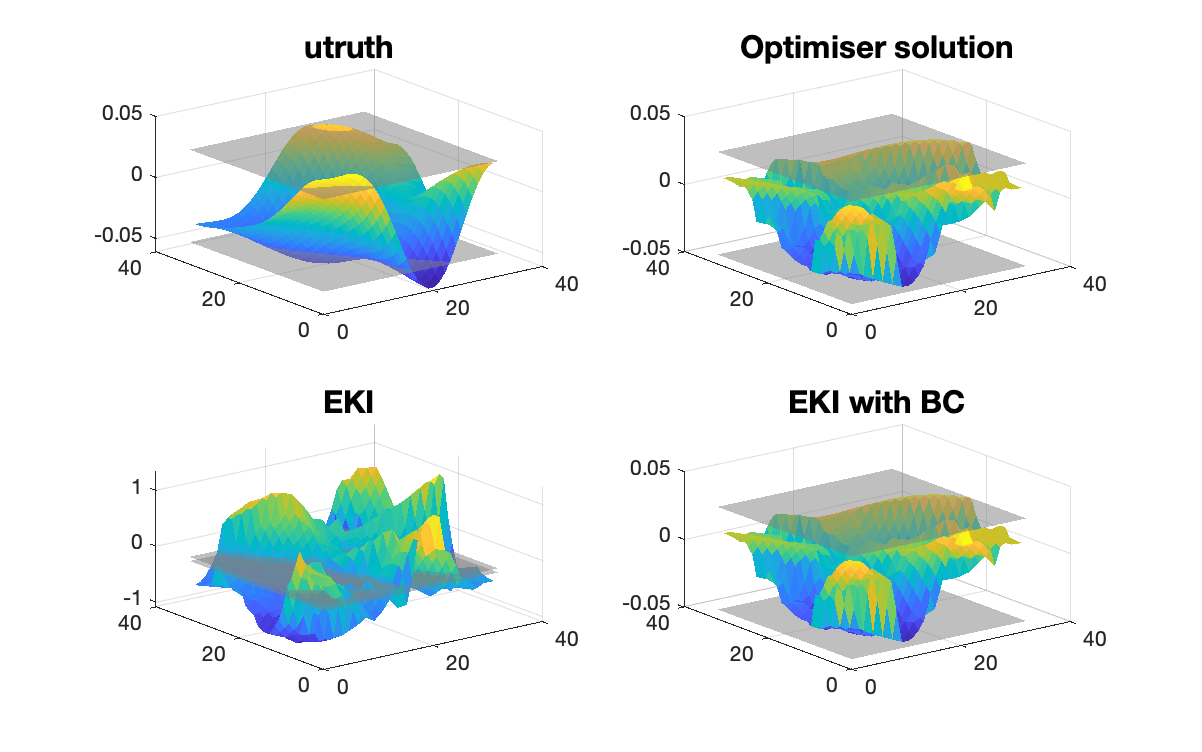}
\caption{Estimated solutions of the EKI (bottom left) and EKI with BC, where $\rho_t=1-\frac{1}{\log(t+\exp(1)}$ (bottom right) and optimiser (upper right). Upper left illustrates the true solution $u^\dagger$. The grey planes illustrate the upper and lower bound} 
\label{fig:est_sol_2d_vi_3d}
\end{figure}

Figure~\ref{fig:est_sol_2d_vi_3d} illustrates the estimated solutions of our methods. We can see that the EKI (bottom left) is outside the bounds, whereas our method with BC satisfies both upper and lower bound and looks very similar to the KKT-point obtained by MATLABs \verb+fmincon+ solver.

\subsubsection{Adaptive penalty parameter} \label{sec:adap_tau}
For our last experiment with adaptive penalty parameter we consider a mesh of the size $6\times 6$, since we have to compute the solutions for several different values of $\tau$, which is computationally expensive. Therefore, we have $d=36$. We take the same upper and lower bounds as above. Furthermore, we compute the solution of \eqref{eqn:dyn_varinfl_adaptau} until time $T=10^5$ and consider fixed covariance inflation of the magnitude $\rho_t=0.7$. We consider five different fixed penalty parameters $\tau\in\{1,10,100,1000,10000\}$ and one adaptive penalty parameter, $\tau(t)=t+1$ to compare this solution with the one for the highest fixed penalty parameter, $\tau=10000$. The remaining variables are the same as above.
\begin{figure}[t]
\centering
\captionsetup{width=.9\linewidth}
\includegraphics[scale=0.45]{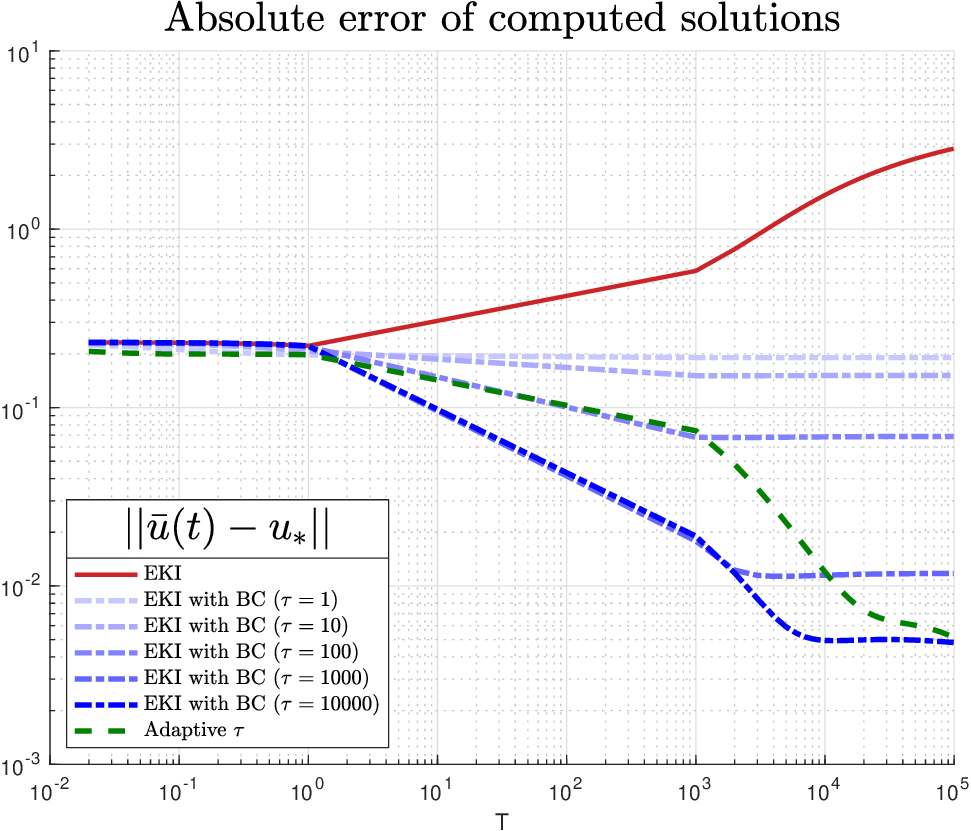}
%\hspace*{0.8cm}%
\includegraphics[scale=0.45]{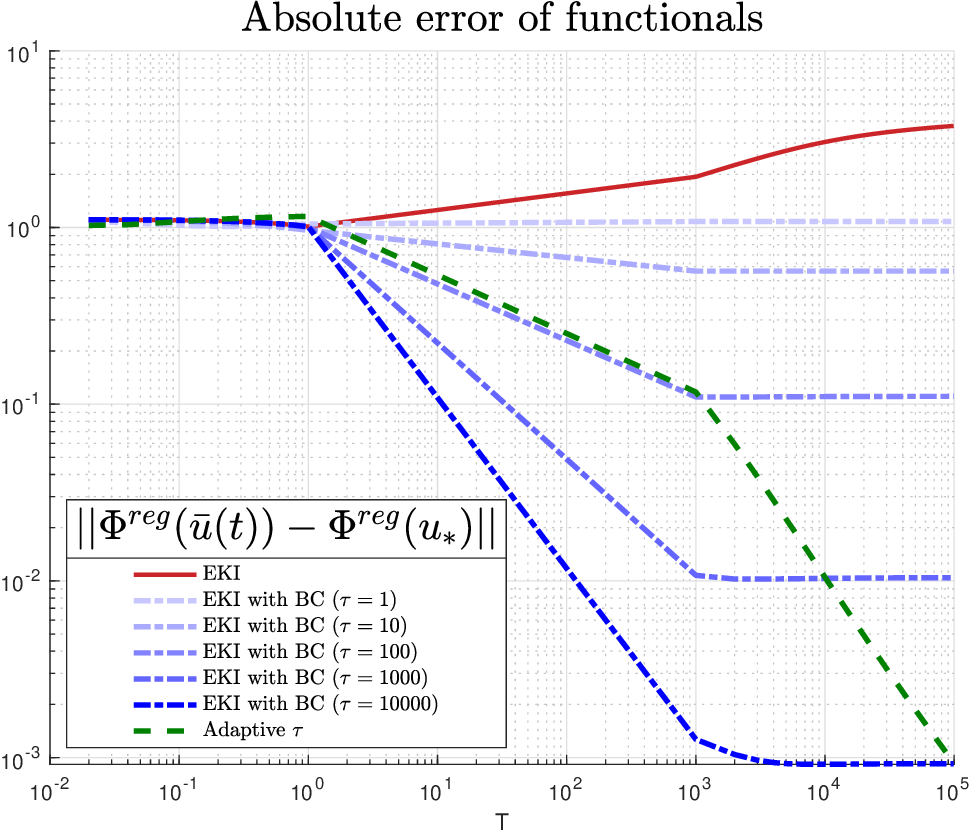}
\caption{Absolute error of computed solutions in the parameter (left) and observation space (right). The red line depicts the EKI. The dotted/dashed blue lines illustrates the EKI with BC for different fixed values of $\tau\in\{1,10,100,1000, 10000\}$. The dashed green line depicts the error of the the Box-constraints algorithm with adaptive increasing $\tau$.}
\label{fig:err_nonlinear_adaptau}
\end{figure}

In Figure~\ref{fig:err_nonlinear_adaptau} we illustrate the errors in the parameter space and observation space to the KKT-point $u_\ast$. We can see that as the fixed penalty parameter $\tau$ increases, the error decreases as described by equation $\eqref{eqn:error_true_kkt}$. Furthermore, for the adaptive choice of $\tau$ we can see that in the beginning the error vanishes slower then for fixed $\tau=10000$, but reaches the same error level at $T=10^5$.\\ This results indicates that the adaptive choice of penalty parameter is a suitable alternative, since it results in a less stiff ODE that needs to be solved. The computational time of the adaptive choice was $4$-times faster than the fixed penalty parameter.

\section{Conclusion} \label{sec:conclusion}
Based on a $\log$-barrier approach we introduced an adaptation of the EKI that allows the incorporation of convex inequality constraints for nonlinear forward problems. Through the inclusion of Tikhonov regularisation as well as covariance inflation we provided a convergence analysis for our method, which includes the quantification of the ensemble collapse as well as convergence as optimisation method. Our numerical experiments confirmed the results. In one experiment %a method how to construct a 
we consider a nonlinear forward problem given as linear problem with additional nonlinear perturbation. %The advantage of this method is that 
The magnitude of nonlinearity affects the amount of regularisation needed to obtain a strongly convex objective function and thus, satisfies our assumptions needed for Theorem~\ref{thm:main_conv}. In our second numerical experiment we considered the 2D-Darcy flow and showcased a method to address the problem of large penalty parameters that are needed in barrier-methods.\\
For future work we look to apply convex inequality constraints into subsampling approaches for the EKI \cite{Hanu_2023}. Moreover, it would be interesting to incoporporate the proposed approach to particle based sampling methods such as the ensemble Kalman sampler \cite{Garbuno2020}.

%In image reconstructions for example, color spectrums of some segments of the image might already be known from the start. Applying different constraints to these segments have the potential to reduce the computational time significantly.

\section*{Acknowledgements}
MH is grateful for the support from MATH+ project EF1-19: Machine Learning Enhanced Filtering Methods for Inverse Problems, funded by the Deutsche Forschungsgemeinschaft (DFG, German Research
Foundation) under Germany's Excellence Strategy – The Berlin Mathematics
Research Center MATH+ (EXC-2046/1, project ID: 390685689).
The authors are very grateful to Claudia Schillings for the helpful discussions as well as proofreading the manuscript.

\bibliographystyle{abbrv}
\bibliography{main}

\appendix
\section{Appendix}
\subsection{Proofs of Lemmas}

Firstly, we show that strong convex functions fulfil the PL - inequality

\begin{lemma}\label{lemma:pl-phib}
    Let $\Phi:\mathbb{R}^{d}\rightarrow \mathbb{R}$ be strongly convex, then $\Phi$ satisfies the PL-inequality, i.e.
    $$\nu\|\nabla \Phi(x)\|^2\geq \Phi(x)-\Phi(x^*),$$
    for $\nu = \frac{1}{2\mu}>0$ and all $x\in \mathbb{R}^{d}$, where $x^*$ is a stationary point.
\end{lemma}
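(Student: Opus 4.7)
The plan is to deduce the PL inequality directly from the defining inequality of strong convexity by minimising its right-hand side over one of the two arguments. Concretely, I would fix $x \in \mathbb R^d$ and apply the $\mu$-strong convexity assumption with $x_2 = x$ and $x_1 = y$ arbitrary, which yields
\[\Phi(y) \ge \Phi(x) + \langle \nabla \Phi(x), y - x\rangle + \frac{\mu}{2}\|y-x\|^2\qquad \text{for all } y\in\mathbb R^d.\]

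Next I would take the infimum over $y$ of the quadratic lower bound on the right. Since $y \mapsto \langle \nabla \Phi(x), y-x\rangle + \frac{\mu}{2}\|y-x\|^2$ is a strictly convex quadratic in $y-x$, its unique minimiser is $y - x = -\tfrac{1}{\mu}\nabla \Phi(x)$ and a short calculation shows the minimum value equals $-\tfrac{1}{2\mu}\|\nabla \Phi(x)\|^2$. Hence for every $y\in\mathbb R^d$,
\[\Phi(y) \ge \Phi(x) - \frac{1}{2\mu}\|\nabla \Phi(x)\|^2.\]

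Finally I would specialise $y = x^\ast$, a stationary point of $\Phi$, to obtain $\Phi(x^\ast) \ge \Phi(x) - \tfrac{1}{2\mu}\|\nabla \Phi(x)\|^2$, and rearrange to
\[\frac{1}{2\mu}\|\nabla \Phi(x)\|^2 \ge \Phi(x) - \Phi(x^\ast),\]
which is the claimed PL inequality with $\nu = \tfrac{1}{2\mu}$. There is no real obstacle here: the only subtlety worth mentioning is that a stationary point of a strongly convex function is automatically the unique global minimiser, so the use of $\Phi(x^\ast)$ as the value on the right-hand side is justified, and the minimisation of the quadratic lower bound is unconstrained over all of $\mathbb R^d$, matching the global strong convexity assumption.
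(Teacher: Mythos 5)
Your proof is correct and follows essentially the same route as the paper: both arguments start from the strong-convexity inequality and extract the bound $\Phi(x)-\Phi(y)\le \tfrac{1}{2\mu}\|\nabla\Phi(x)\|^2$, you by explicitly minimising the quadratic lower bound over $y$, the paper by the equivalent completion-of-the-square identity, before specialising to $y=x^\ast$. No gap to report.
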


\begin{proof}
    Since $\Phi $ is strongly convex, there exists a $\mu>0$ such that 

    \[\Phi(y)\geq \Phi(x)+(y-x)\nabla\Phi(x)+\frac{\mu}{2}\|y-x\|^2,\quad \forall x,y\in\mathbb{R}^{d}\]

    which implies 
    \[\Phi(x)-\Phi(y)\leq \langle x-y,\nabla\Phi(x)\rangle-\frac{\mu}{2}\|x-y\|^2.\quad \forall x,y\in\mathbb{R}^{d}\]

    Note that $\|a-b\|^2=\|a^2\|-2\langle a,b\rangle+\|b\|^2$ for all $a,b\in\mathbb{R}^{d}$. Therefore, we have $\langle a,b\rangle-\frac{1}{2}\|a^2\|=\frac{1}{2}\|b^2\|-\frac{1}{2}\|a-b\|^2$. Setting $a=(x-y)\sqrt{\mu}$ and $b=\frac{1}{\sqrt{\mu}}\nabla \Phi(x)$, we obtain

    \begin{align*}
        \Phi(x)-\Phi(y)\leq&\frac{1}{2}\|\frac{1}{\sqrt{\mu}}\nabla\Phi(x)\|^2-\frac{1}{2}\|(x-y)\sqrt{\mu}-\frac{1}{\sqrt{\mu}}\nabla \Phi(x)\|^2\\
        \leq&\frac{1}{2\mu}\|\nabla\Phi(x)\|^2.
    \end{align*}
    Hence, $\Phi$ satisfies the PL-inequality with constant $\frac{1}{2\mu}$.
\end{proof}

The next statement considers our applied inverse problem based on a linear forward model with nonlinear perturbation. It provides sufficient conditions for the Tikhonov regularized loss function $\Phi^{\scaleto{\mathrm{reg}}{5pt}}$ to be strongly convex.
\begin{lemma} \label{lemma:ex_convex}
    Consider the potential \eqref{eqn:regul_pot} where $G(u)=Au+\varepsilon\left[\sin(u_1),...,\sin(u_K)\right]^T$, where $A\in\mathbb{R}^{K\times K}$ and $\varepsilon>0$. Further, assume that $\sup_{u\in\Omega} \|u\|\le B$ and let $A_{\max} = \max_{i,j}|A_{i,j}|$, $y_{\max} = \max_{i}|y_i|$ and $\lambda > ((4+B)a_{\max} + y_{\max}) \varepsilon + \varepsilon^2 $, then the Tikhonov regularised loss function 
    \[\Phi^{\scaleto{\mathrm{reg}}{5pt}}(u) = \frac12\|G(u)-y\|^2 + \frac{\lambda}2 \|u\|^2\]
    is strongly convex and $L$-smooth.
\end{lemma}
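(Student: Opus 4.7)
The plan is to compute the Hessian of $\Phi^{\scaleto{\mathrm{reg}}{5pt}}$ explicitly and then bound its spectrum from below (for $\mu$-strong convexity) and above (for $L$-smoothness) using the assumption on $\lambda$.

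Writing $G(u) = Au + \varepsilon S(u)$ with $S(u) = (\sin u_1,\dots,\sin u_K)^T$, the Jacobian is $DG(u) = A + \varepsilon D(u)$ where $D(u) = \mathrm{diag}(\cos u_i)$. Since $\nabla \Phi^{\scaleto{\mathrm{reg}}{5pt}}(u) = DG(u)^T(G(u)-y)+\lambda u$, differentiating once more and using $\nabla^2 G_k(u) = -\varepsilon \sin(u_k)\, e_ke_k^T$ yields
\[
\nabla^2 \Phi^{\scaleto{\mathrm{reg}}{5pt}}(u) \;=\; DG(u)^T DG(u) \;-\; \varepsilon\, \mathrm{diag}\bigl((G(u)-y)_i \sin(u_i)\bigr)_{i=1}^K \;+\; \lambda I.
\]
The first term expands as $A^TA + \varepsilon(A^TD(u)+D(u)A) + \varepsilon^2 D(u)^2$. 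We will drop the positive semidefinite part $A^TA$ when lower bounding, and use the elementary bounds $\|D(u)\|_{\mathrm{op}}\le 1$ and $\|A^TD(u)+D(u)A\|_{\mathrm{op}}\le 2\|A\|_{\mathrm{op}}$.

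Second, I would bound the diagonal perturbation entry by entry via
\[
|(G(u)-y)_i \sin(u_i)| \;\le\; |A_i \cdot u| + \varepsilon + y_{\max}\,,
\]
controlling $|A_i\cdot u|$ by $A_{\max}$ and $B$ (the a-priori bound on $\|u\|$). Combining the spectral-norm bounds on the three pieces of $\nabla^2 \Phi^{\scaleto{\mathrm{reg}}{5pt}}(u)-\lambda I$ gives, after collecting terms, an estimate of the form
\[
\nabla^2 \Phi^{\scaleto{\mathrm{reg}}{5pt}}(u) \;\succeq\; \bigl(\lambda - ((4+B)A_{\max}+y_{\max})\,\varepsilon - \varepsilon^2\bigr) I,
\]
so strong convexity with constant $\mu := \lambda - ((4+B)A_{\max}+y_{\max})\varepsilon - \varepsilon^2 > 0$ follows directly from the hypothesis on $\lambda$.

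For $L$-smoothness, I would bound the same Hessian from above: $\|DG(u)^TDG(u)\|_{\mathrm{op}} \le (\|A\|_{\mathrm{op}}+\varepsilon)^2$, the diagonal term is dominated as before, and $\lambda I$ contributes $\lambda$; this gives a finite $L$, from which $L$-Lipschitz continuity of $\nabla \Phi^{\scaleto{\mathrm{reg}}{5pt}}$ follows by the mean value theorem.

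The main obstacle is purely bookkeeping: matching constants to produce exactly the coefficient $(4+B)A_{\max}+y_{\max}$ in front of $\varepsilon$. In particular, one must decide whether $\|u\|\le B$ is to be read in $2$-norm (giving $|A_i\cdot u|\le \sqrt{K}\,A_{\max}\,B$) or in $\infty$-norm (giving $|A_i\cdot u|\le K A_{\max} B$), and how the ``$4$'' arises from combining the $\varepsilon$-cross terms $A^TD(u)+D(u)A$ with the $\varepsilon$-contributions from the diagonal perturbation; the ``$4$'' is consistent with two units from the cross term $2\|A\|_{\mathrm{op}}$ bounded crudely and two from the constant $\varepsilon + y_{\max}$ part of the diagonal perturbation. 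Aside from this careful matching, the argument is the standard Hessian-perturbation calculation.
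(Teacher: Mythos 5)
Your Hessian computation is correct and your overall skeleton (split off $A^TA\succeq 0$, show the $\varepsilon$-perturbation plus $\lambda I$ dominates, bound the Hessian from above for $L$-smoothness) is the same as the paper's. The genuine gap is exactly the step you flag as ``bookkeeping'': with operator-norm bounds the stated threshold on $\lambda$ is not reachable. You bound the cross term by $\varepsilon\|A^TD(u)+D(u)A\|_{\mathrm{op}}\le 2\varepsilon\|A\|_{\mathrm{op}}$ and the diagonal perturbation via $|A_{i,:}u|$, but $\|A\|_{\mathrm{op}}$ is not controlled by $A_{\max}$ without a dimension factor (in general $\|A\|_{\mathrm{op}}$ can be as large as $KA_{\max}$), and $|A_{i,:}u|\le \sqrt{K}A_{\max}B$ in the Euclidean norm. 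So your route only yields $\nabla^2\Phi^{\scaleto{\mathrm{reg}}{5pt}}\succeq(\lambda-C(K,A,B,y,\varepsilon))I$ for a larger, dimension-dependent constant, i.e.\ strong convexity for $\lambda$ sufficiently large, but not under the precise hypothesis $\lambda>((4+B)A_{\max}+y_{\max})\varepsilon+\varepsilon^2$ that the lemma asserts. Your guess for the origin of the ``$4$'' is also not how it arises: it does not come from $2\|A\|_{\mathrm{op}}$ plus the $\varepsilon+y_{\max}$ part (those produce the separate $\varepsilon^2$ and $y_{\max}\varepsilon$ terms), so this part of the argument would not close as sketched.

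The paper instead works entrywise on the remainder $M_{\varepsilon,\lambda}:=\nabla^2\Phi^{\scaleto{\mathrm{reg}}{5pt}}-A^TA$ and applies strict diagonal dominance (Gershgorin): the diagonal entries are bounded below by $\lambda-\varepsilon(A_{\max}B+\varepsilon+y_{\max})-2A_{i,i}\varepsilon$, the off-diagonal entries are $\varepsilon\left(A_{l,i}\cos(x_l)+A_{i,l}\cos(x_i)\right)$ with row sums bounded by $2A_{\max}\varepsilon$, and the ``$4$'' is the sum of the $2A_{i,i}\varepsilon$ contribution on the diagonal and the $2A_{\max}\varepsilon$ off-diagonal row-sum bound. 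Positive definiteness of $M_{\varepsilon,\lambda}$ under the stated condition on $\lambda$ then follows, and $L$-smoothness is obtained, as in your sketch, by bounding the Hessian entries on the bounded set where $\|u\|\le B$ (this is also why the smoothness is only local, not global). If you replace your operator-norm estimates by this diagonal-dominance argument, your proof matches the lemma as stated; as written, it proves a weaker statement with an unspecified, dimension-dependent threshold for $\lambda$.
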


\begin{proof}
    The Jacobian $J_G$ of $G$ is given by
    \[J_G(x)=\left(A^T+\varepsilon diag(\cos(x_1),...,\cos(x_K)\right).\]
    Hence, the $i$-th row and $j$-th component of the row have the following structure
    \begin{align*}
        (J_G(x))_{i,:}&= A^T_{i,:}+\varepsilon\left[0,...,\cos(x_i),...,0\right]\\
        (J_G(x))_{i,j}&= A_{i,j}+\varepsilon \cos(x_i)\delta_{ij},
    \end{align*}
    where $\delta_{i,j}$ denotes the Kronecker symbol. Hence, we have
    \[\left(\frac{\partial\Phi}{\partial x}\right)_i=\sum_{j=1}^K\left(A_{j,i}+\varepsilon \cos(x_i)\delta_{ij}\right)\left(A_{j,:}x+\varepsilon\sin(x_j)-y_j\right).\]
    And therefore, we obtain for the entries of the Hessian
    \begin{align*}
        \frac{\partial^2\Phi}{\partial x_i\partial x_l}=&\sum_{j=1}^K\left(-\varepsilon \sin(x_i)\delta_{ij}\delta_{il}\right)\left(A_{j,:}x+\varepsilon\sin(x_j)-y_j\right)\\
        +&\sum_{j=1}^K\left(A_{j,i}+\varepsilon \cos(x_i)\delta_{ij}\right)\left(A_{j,l}+\varepsilon\cos(x_j)\delta_{jl}\right)+\lambda\\
        =&\left(-\varepsilon \sin(x_i)\delta_{il}\right)\left(A_{i,:}x+\varepsilon\sin(x_i)-y_i\right)\\
        +&\sum_{j=1}^K A_{j,i} A_{j,l}+\sum_{j=1}^K A_{j,i}\varepsilon\cos(x_j)\delta_{jl}\\
        +&\sum_{j=1}^K \varepsilon \cos(x_i)\delta_{ij}A_{j,l}+\varepsilon^2\cos(x_i)^2\delta_{il}+\lambda\, .
    \end{align*}
    Thus, the entries of the Hesse-Matrix are given by   
\[\left(H_{\Phi^{\scaleto{\mathrm{reg}}{5pt}}})\right)_{i,l}=\begin{cases}
			\sum_{j=1}^K A_{j,i}^2-\varepsilon\sin(x_i)\left(A_{i,:}x+\varepsilon\sin(x_i)-y_i\right)+ 2A_{i,i}\varepsilon\cos(x_i)+\varepsilon^2\cos(x_i)^2+\lambda, & \text{if $i=l$},\\
            \sum_{j=1}^K A_{j,i} A_{j,l}+A_{l,i}\varepsilon\cos(x_l)+ A_{i,l}\varepsilon \cos(x_i), & \text{if $\neq l$.}
		 \end{cases}
\]
We can split the Hessian into two parts:
\[\left(H_{\Phi^{\scaleto{\mathrm{reg}}{5pt}}})\right)=A^TA+M_{\varepsilon,\lambda}\, ,\]
where 
\[M_{\varepsilon,\lambda}=\begin{cases}
			-\varepsilon\sin(x_i)\left(A_{i,:}x+\varepsilon\sin(x_i)-y_i\right)+ 2A_{i,i}\varepsilon\cos(x_i)+\varepsilon^2\cos(x_i)^2+\lambda, & \text{if $i=l$,}\\
           \varepsilon\cos(x_l)+ A_{i,l}\varepsilon \cos(x_i), & \text{if $\neq l$.}
		 \end{cases}
\]
Since $A^TA$ is positive semi-definite we continue to show that $M_{\varepsilon,\lambda}$ is positive definite.
We start by lower bounding the diagonal elements of $\left(M_{\varepsilon,\lambda}\right)$ by
\[(M_{\varepsilon,\lambda})_{i,i} \ge - \varepsilon (A_{\max} B + \varepsilon + y_{\max}) - 2 A_{i,i} \varepsilon + \lambda\,, \]
where $A_{\max} = \max_{i,j}\ |A_{i,j}|$, $y_{\max}= \max_{i}|y_i|$ and $B = \sup_{x\in\Omega} \|x\|<\infty$. 
On the other side, we can bound the off-diagonal elements by
\[ \sum_{l=1}^K|(M_{\varepsilon,\lambda})_{i,l}| \le  2A_{\max}\varepsilon\,.\]
Hence, by assumption we have for all $i\in\{1,...,K\}$
\[\left(M_{\varepsilon,\lambda}\right)_{i,i}\geq - \varepsilon (A_{\max} B + \varepsilon + y_{\max}) - 2 A_{i,i} \varepsilon + \lambda > 2A_{\max}\varepsilon \geq \sum_{i\neq j} |\left(M_{\varepsilon,\lambda}\right)_{i,j}|\, .\]
Therefore, $M_{\varepsilon,\lambda}$ is positive definite and by that also $H_{\Phi^{\scaleto{\mathrm{reg}}{5pt}}}$. Furthermore, we can upper bound the diagonal elements of $M_{\varepsilon,\lambda}$ by
\[(M_{\varepsilon,\lambda})_{i,i} \le \varepsilon (A_{\max} B + \varepsilon + y_{\max}) + 2 A_{i,i} \varepsilon +\varepsilon^2+ \lambda\,. \]
Hence, all entries of $H_{\Phi^{\scaleto{\mathrm{reg}}{5pt}}}$ are bounded implying the $L$-smothness of $\Phi^{\scaleto{\mathrm{reg}}{5pt}}.$
\end{proof}
% \begin{remark}
%     We note that the assumption $\sum_{j=1}^K A_{j,i}^2> \sum_{l=1}^K \sum_{j=1}^K |A_{j,i}A_{j,l}|$ can for example be easily verified for the matrix
% \[A = \begin{pmatrix} 2 & -1 & \ & \ & \ \\
% -1 & 2 & -1 & \ & \ \\
% \ & \ & \vdots & \ & \ \\
% \ & \ & \ & -1 & 2
% \end{pmatrix}\, \]
% for which it holds true that 
% \[ \sum_{l,j=1}^K |A_{j,i}A_{j,l}| \le 2 < 5 \le \sum_{j=1}^K A_{i,j}^2\,. \]
% Moreover, in this setting we have $A_{\max} = 2$. 
% \end{remark}
The next lemma provides the solution of the ODE needed to derive the lower bound on the ensemble collapse in Lemma~\ref{lemma:ens_col_lower}.
\begin{lemma} \label{alem:ode_lower_bound}
Let $a,b,c>0$ then the solution of the ODE
\begin{equation}\label{eq:app:ODE}
    \frac{{\mathrm d}x(t)}{{\mathrm d}t} = - \frac{a}{t+b} x(t) - c x(t)^2
\end{equation}
is given by
\begin{equation} \label{eqn:sol_ode}
    x(t) = \frac{1-a}{c(t+b) + (1-a) k_1 (t+b)^{a}},\quad x(0) = x_0>0\,,
\end{equation}
where $k_1$ is chosen such that the initial condition is satisfied.
\end{lemma}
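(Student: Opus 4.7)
The plan is to recognise \eqref{eq:app:ODE} as a Bernoulli equation of degree two with a time-dependent linear coefficient, which can be linearised by the standard reciprocal substitution. First I would introduce $y(t) := 1/x(t)$. By the chain rule $y'(t) = -x'(t)/x(t)^2$, and dividing both sides of \eqref{eq:app:ODE} by $-x^2$ gives the first-order inhomogeneous linear ODE
$$y'(t) - \frac{a}{t+b}\, y(t) = c.$$

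Next I would solve this linear ODE via the integrating factor method. The integrating factor is $\mu(t) = \exp\!\left(-\int_0^t \frac{a}{s+b}\,\mathrm ds\right)$, which up to a multiplicative constant equals $(t+b)^{-a}$. Multiplying through gives $\frac{\mathrm d}{\mathrm d t}\!\left[(t+b)^{-a} y(t)\right] = c\,(t+b)^{-a}$. Integrating, and using that the case relevant to Lemma~\ref{lemma:ens_col_lower} has $a\in(0,1)$ (so in particular $a\neq 1$), yields
$$(t+b)^{-a}\, y(t) \;=\; \frac{c}{1-a}\,(t+b)^{1-a} + k_1,$$
where $k_1$ is a constant of integration. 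Solving for $y(t)$ and then inverting gives
$$x(t) \;=\; \frac{1}{y(t)} \;=\; \frac{1-a}{c(t+b) + (1-a)\, k_1\,(t+b)^{a}},$$
which matches \eqref{eqn:sol_ode}. The initial condition $x(0) = x_0 > 0$ determines $k_1$ uniquely through $c\,b + (1-a)\,k_1\,b^{a} = (1-a)/x_0$.

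There is no real obstacle here — the computation is routine — but two points warrant care. First, the substitution $y = 1/x$ is only valid as long as $x(t)$ does not vanish; I would justify this a posteriori by noting that the explicit formula produces a strictly positive $x$ as long as the denominator does not vanish, which can be checked from the sign of $k_1$ and the assumption $a\in(0,1)$. Second, to make the argument airtight without relying on the substitution being reversible throughout, I would conclude by directly differentiating the explicit formula for $x(t)$ and verifying it satisfies \eqref{eq:app:ODE}, which reduces to an algebraic identity; uniqueness then follows from the local Lipschitz continuity of the right-hand side of \eqref{eq:app:ODE} in $x$ on any region bounded away from the blow-up of the denominator.
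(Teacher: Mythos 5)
Your proposal is correct, and it takes a different route from the paper. The paper's proof is pure verification: it differentiates the closed-form expression \eqref{eqn:sol_ode} and rearranges algebraically until the right-hand side of \eqref{eq:app:ODE} appears, without explaining where the formula comes from and without touching uniqueness. You instead \emph{derive} the formula by the standard Bernoulli linearisation $y=1/x$, which turns \eqref{eq:app:ODE} into $y'-\frac{a}{t+b}y=c$, solve it with the integrating factor $(t+b)^{-a}$, and recover exactly \eqref{eqn:sol_ode} together with the correct determination of $k_1$ from $x(0)=x_0$; your closing step (differentiate the explicit formula and check it satisfies the ODE, then invoke local Lipschitz continuity for uniqueness) is in fact the entirety of the paper's argument, so your proof strictly contains it. What each buys: the paper's check is shorter, while your derivation explains the provenance of the solution, makes explicit that $a\neq 1$ is needed for the antiderivative (the lemma as stated with only $a>0$ is silently using $a\in(0,1)$, the case relevant to Lemma~\ref{lemma:ens_col_lower}), and addresses uniqueness, which the word ``the solution'' in the statement implicitly requires. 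Your caveat about $x$ not vanishing is also easily closed: writing the denominator as $(t+b)^{a}\bigl(c(t+b)^{1-a}+(1-a)k_1\bigr)$, the bracket is increasing in $t$ (since $a<1$) and positive at $t=0$ by the initial condition, so the denominator never vanishes for $t\ge0$ and the substitution is reversible throughout.
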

\begin{proof}
    The proof follows the ideas of Lemma A.3 in \cite{Tong_2023}. We show that that \eqref{eqn:sol_ode} solves the ODE:
    \begin{align*}
        \frac{{\mathrm d}x(t)}{{\mathrm d}t} =&-\frac{(1-a)(c+a(1-a)k_1(t+b)^{a-1})}{\left(c(t+b) + (1-a) k_1 (t+b)^{a}\right)^2}\\
        =&\left(-\frac{c+a(1-a)k_1(t+b)^{a-1}}{c(t+b) + (1-a) k_1 (t+b)^{a}}\right)\left(\frac{1-a}{c(t+b) + (1-a) k_1 (t+b)^{a}}\right)\\
        =&\left(-\frac{c+ca-ca+a(1-a)k_1(t+b)^{a-1}}{c(t+b) + (1-a) k_1 (t+b)^{a}}\right)x(t)\\
        =&\left(-\frac{ca+a(1-a)k_1(t+b)^{a-1}+c(1-a)}{c(t+b) + (1-a) k_1 (t+b)^{a}}\right)x(t)\\
        =&\left(-\frac{a(t+b)\left(c+(1-a)k_1(t+b)^{a-1}\right)}{(t+b)(c(t+b) + (1-a) k_1 (t+b)^{a})}-cx(t)\right)x(t)\\
        =&\left(-\frac{a}{t+b}-cx(t)\right)x(t)\,,
    \end{align*}
    which is in the needed form of \eqref{eq:app:ODE}.
    % Furthermore, we have
    % \[ax_t^2=\frac{c_1^2(t+d)^{-2c_1}+B^2c_2^2(t+d)^{-2c_2}+2Bc_1c_2(t+d^{-c_1-c_2})}{a\left[(t+d)^{1-c_1}+B(t+d)^{1-c_2}\right]^2}\]
    % Hence, we obtain
    % \begin{align*}
    %     x_t'+ax_t^2=&\frac{(c_1+c_1^2)(t+d)^{-2c_1}+B^2(c_2+c_2^2)(t+d)^{-2c_2}}{a\left[(t+d)^{1-c_1}+B(t+d)^{1-c_2}\right]^2}\\
    %     &+\frac{\left[2c_1c_2+(c_1-c_2)^2+c_1+c_2\right]B(t+d)^{-c_1-c_2}}{a\left[(t+d)^{1-c_1}+B(t+d)^{1-c_2}\right]^2}\\
    %     &=\frac{-ab(t+d)^{-2c_1}-abB^2(t+d)^{-2c_2}-2Bab(t+d)^{-c_1-c_2}}{a\left[(t+d)^{1-c_1}+B(t+d)^{1-c_2}\right]^2},
    % \end{align*}
    % where we used that $c_1,c_2$ are the solutions of $c^2+c=-ab$. Note that
    % \[\left[(t+d)^{1-c_1}+B(t+d)^{1-c_2}\right]^2=(t+d)^{2-2c_1}+B^2(t+d)^{2-2c_1}+2B(t+d)^{2-c_1-c_2}.\]
    % Therefore, we have
    % \[x_t'+ax_t^2=\frac{-b}{(d+t)^2}\]
\end{proof}

\subsection{Proof of the feasibility and unique existence of solution}\label{app:A2}
\begin{proof}[Proof of Proposition~\ref{lemma:ens_in_box}]
    Without loss of generality we assume that $m=1$, i.e.~there is only one inequality constrain $h(u)\le 0$. The evolution of $h(\bar u_t)$ is given by
    \[\frac{{\mathrm d}h(\bar u_t)}{{\mathrm d}t} = \langle \nabla h(\bar u_t),\frac{{\mathrm d}\bar u_t}{{\mathrm d}t}\rangle = \langle \nabla h(\bar u_t), v(u_t)\rangle + \frac{1}{\tau}\frac{1}{h(\bar u_t)} \langle \nabla h(\bar u_t), \widehat C(u_t) \nabla h(\bar u_t) \rangle\, , \]
    where $v$ is defined in \eqref{eq:drift_force}. Let $T>0$, then for any solution $(\bar u_t)_{t\in[0,T]}$, $h(\bar u_T)$ can be represented by
    \[h(\bar u_T) = h(\bar u_0) + \int_0^T\left(\langle \nabla h(\bar u_t), v(u_t)\rangle + \frac{1}{\tau}\frac{1}{h(\bar u_t)} \langle \nabla h(\bar u_t), \widehat C(u_t) \nabla h(\bar u_t) \rangle\right)\, {\mathrm{d}}t\,.\]
    Suppose there exists $T_1>0$ such that $h(\bar u_{T_1})>0$. Since we initialize with $h(\bar u_{0})<0$ by continuity of solutions and continuity of $h$, it follows that there exists $t_{\min}\in (0,T_1)$ such that $h(\bar u_{t_{\min}}) = 0$ and $h(\bar u_{t})<0$ for all $t<t_{\min}$. Moreover, again by continuity it follows that there must exists $\delta>0$ such that  
    \begin{equation} \label{eq:contr_claim}
    \langle \nabla h(\bar u_t), v(u_t)\rangle + \frac{1}{\tau}\frac{1}{h(\bar u_t)} \langle \nabla h(\bar u_t), \widehat C(u_t) \nabla h(\bar u_t) \rangle>0
    \end{equation}
    for all $t\in (t_{\min}-\delta,t_{\min})$ and therefore, also $\nabla h(\bar u_t) \neq 0$ for all $t\in (t_{\min}-\delta,t_{\min})$. Remember, by Assumption~\ref{ass:inequality_constr} we  also have that $\nabla h(\bar u_{t_{\min}}) \neq 0$, see Remark~\ref{rem:non_degenerate_grad}.  
    Next, for $s\in[0,t_{\min})$ we consider
    \[ v(u_s) \le \|\widehat{C}^{u,G}_s\|_{F}\|\Gamma^{-1}\|_{F}\left(\|\bar{G}_s\|+\|y\|\right)+\|\widehat{C}(u_s)\|_F\|C_0^{-1}\|_F\|\bar{u}_s\| \]
    We can approximate the covariance $\widehat{C}^{u,G}_s$ by the following %(see \cite{Chada2019ConvergenceAO})
    \begin{align*}
        \|\widehat{C}^{u,G}_s\|_F^2\leq \left(\frac{1}{J}\sum_{j=1}^J \|e_s^{(j)}\|^2 \right) \left( \frac1J \sum_{j=1}^J \| G(u_s^{(j)}) -\bar G_s \|^2 \right)
    \end{align*}
    using a same argumentation as in proof of Lemma~\ref{lemma:ens_col_lower}. We approximate the first term by
    \[\frac{1}{J}\sum_{j=1}^J \|e_s^{(j)}\|^2 = V_e(s) \le V_e(0)\]
    %\[\|\widehat{C}^{u}_s\|_2\leq \frac{1}{\|\widehat{C}^{u}_0\|_2^{-1}+c_1s}\le\|\widehat{C}^{u}_0\|_2 .\]
    and the second factor by
    \begin{align*}
        %\|\widehat{C}^{G,G}_s\|_{HS}=\text{Tr}(\widehat{C}^{G,G}_s\widehat{C}^{G,G}_s)^{1/2}&\leq
        \frac{1}{J}\sum_{j=1}^J\|G(u^{(j)}_s)-\bar{G}\|^2
        \leq c_{lip}^2 V_e(s)%\leq  \frac{c_{lip}^2}{\frac{2\tau\sigma_{min}}{J}s+V_e(0)^{-1}}
        \le c_{lip}^2 V_e(0),
    \end{align*}
    where we used Lemma~\ref{lemma:bound_enskol}. %and results from the proof of Lemma~4.4 in \cite{Weissmann_2022}.
    %Hence, there exists constants %$c_1,c_2,c_3,c_4,c_5,c_6,c_7>0$ such that
    Hence, we have
    $\|v({u}_s)\|\leq  B$
    for all $s\in\left[0,t_{min}\right]$ and some $B>0$. Similarly, using that $\|\nabla h(u)\|\le C(R)$, we have that $|\langle \nabla h(\bar u_s), v(u_s)\rangle | \le B$ for $s\in\left[0,t_{min}\right]$ and some constant $B$.
    Finally, we observe that  
    \[ \frac{1}{\tau}\frac{1}{h(\bar u_s)} \langle \nabla h(\bar u_s), \widehat C(u_s) \nabla h(\bar u_s) \rangle < 0 \]
    for all $s\in\left[0,t_{min}\right)$, since $\langle \nabla h(\bar u_s), \widehat C(u_s) \nabla h(\bar u_s) \rangle>0$ using Lemma~\ref{lemma:ens_col_lower}. 
    Therefore, using 
    \[\lim_{\varepsilon\to0}\frac{1}{h(\bar u_{t_{\min}-\varepsilon})} = -\infty \]
    since $h(\bar u_{t_{\min}-\varepsilon})\nearrow 0$ for $\varepsilon\to0$, and $\nabla h(\bar u_t) \neq 0$ for all $t\in(t_{\min}-\delta,t_{\min}]$, we obtain 
    \[ \lim_{\varepsilon\to0} \langle \nabla h(\bar u_{t_{\min}-\varepsilon}), v(u_{t_{\min}-\varepsilon})\rangle + \frac{1}{\tau}\frac{1}{h(\bar u_{t_{\min}-\varepsilon})} \langle \nabla h(\bar u_{t_{\min}-\varepsilon}), \widehat C(u_{t_{\min}-\varepsilon}) \nabla h(\bar u_{t_{\min}-\varepsilon})\rangle  = -\infty \]
    which is in contradiction to \eqref{eq:contr_claim}. For the second part, we know by local Lipschitz-continuity that locally there exists a unique solution of \eqref{eqn:dyn_varinfl}. Hence, as the local solution remains in $\Omega$ it is also a global solution.
\end{proof}

\end{document}